\newtheorem{theorem}{Theorem}[section]
\newtheorem{thmA}{Theorem}
\newtheorem{thmB}{Theorem}
\newtheorem{lemma}[theorem]{Lemma}
\newtheorem{proposition}[theorem]{Proposition}
\theoremstyle{definition}
\newtheorem{definition}[theorem]{Definition}
\newtheorem{example}[theorem]{Example}
\theoremstyle{remark}
\newtheorem{remark}[theorem]{Remark}
\numberwithin{equation}{section}
\title{Typical conservative homeomorphisms have total metric mean dimension}
\author{Gabriel Lacerda and Sergio Romaña}
\date{} 
\address{Instituto de Matematica, Universidade Federal do Rio de Janeiro. C. P. 68.530, CEP 21.945-970, Rio de Janeiro, RJ.}
\email{lacerda@im.ufrj.br}
\email{sergiori@im.ufrj.br}
\thanks{The authors would like to thank Alexander Arbieto for the suggestion that resulted in this work. The first author is currently a PhD student founded by a CNPq grant. The second author is supported by FAPERJ with the grant Bolsa Jovem Cientista do Nosso Estado No. E-26/201.432/2022.}
\begin{document}
\maketitle

\begin{abstract}
    Given a compact smooth boundaryless manifold with dimension greater than one endowed with a locally positive non-atomic measure $\mu$, we prove that typical $\mu$-preserving homeomorphisms have upper metric mean dimension, with respect to the Riemannian distance, equal to the dimension of the manifold. Moreover, we prove that $\mu$ is a measure of maximal metric mean dimension, with respect to the variational principle established in \cite{velozo_velozo_2017}. 
\end{abstract}


\section{Introduction}

In order to calculate the complexity of a continuous dynamical system on a compact metric space, denoted as $f: X \to X$, many authors referred to its \textit{topological entropy}. This quantity is an invariant by topological conjugation and may be infinite. Indeed, Yano proved in \cite{yano_1980} that, on a compact smooth manifold with dimension greater than one, typical homeomorphisms have infinite topological entropy.

Another topological invariant is the \textit{mean dimension}, proposed by Gromov \cite{gromov_1999} at the end of the last century. It counts the average number of parameters needed to describe the orbit of a point in $X$, and gives a numerical invariant for infinite dimensional dynamical systems with infinite topological entropy. In particular, if the phase space $X$ has finite dimension and $f$ is a homeomorphism, then the mean dimension of the system is zero.

However, the mean dimension is difficult to compute. Therefore, Lindenstrauss and Weiss introduced in \cite{lindenstrauss_weiss_2000} the concept of \textit{metric mean dimension}, which depends on the metric of the phase space; hence, it is not an invariant under topological conjugacy. It is also an upper bound for the mean dimension and is closely related to the topological entropy. Precisely, it was proven by A. Velozo and R. Velozo in \cite{velozo_velozo_2017} that the dimension of $X$ is an upper bound for the metric mean dimension.

Also in \cite{velozo_velozo_2017}, the authors established a new variational principle for the metric mean dimension, analogous to the one introduced in \cite{lindenstrauss_tsukamoto_2018}. This new principle simplifies computations and is utilized in one of our main results.

Recently, it has been shown in \cite{carvalho_rodrigues_varandas_2020} by M. Carvalho, F. B. Rodrigues and P. Varandas that typical homeomorphisms, on a compact smooth boundaryless manifold with dimension $n$ greater than one, has metric mean dimension equal to $n$. Furthermore, for all $\alpha \in [0, n]$, it was proved by J. Acevedo, S. Romaña and R. Arias in \cite{acevedo_romana_arias_2023} that the set of homeomorphisms on $X$ with metric mean dimension equal to $\alpha$ is dense in $\text{Homeo}(X)$.


\subsection{Conservative homeomorphisms}

From now on, consider $X$ a compact connected smooth boundaryless manifold of dimension $n \geq 2$, endowed with a Riemannian distance $d$. The set of homeomorphisms of $X$, denoted by $\text{Homeo}(X)$, is a complete metric space with the metric 
\[
D(f, g) = \max\limits_{x \in X}\{d\big(f(x), g(x)\big), d\big(f^{-1}(x), g^{-1}(x)\big)\}.
\]

A \textit{good} Borel probability measure $\mu$ on $X$ is a measure satisfying:
\begin{enumerate}
    \item[i)] For all $x \in X$, $\mu(\{x\}) = 0$ (non-atomic);
    \item[ii)] For all $B \subset X$ non-empty open set, $\mu(B) > 0$ (locally positive).
\end{enumerate}

Once for all, fix such a measure $\mu$. If $f \in \text{Homeo}(X)$ preserves $\mu$, then call it a \textit{conservative} homeomorphism; otherwise call $f$ \textit{dissipative}. Denote by $\text{Homeo}(X, \mu)$ the set of conservative homeomorphisms, that is also a complete metric space with the metric D (see \cite{alpern_prasad_04} in Chapter 2). These measures are as well called Oxtoby-Ulam measures.

\begin{remark}
    The topology generated by the metric $D$ on both $\text{Homeo}(X)$ and $\text{Homeo}(X, \mu)$ is called the \textit{$C^0$-topology}.
\end{remark}

\begin{remark}
    If $X$ is a manifold with boundary, then the good measure also satisfies $\mu(\partial X) = 0$. This case is not relevant because our proof relies on local modifications of homeomorphisms in the interior of the manifold.
\end{remark}


\subsection{Typical property}

Let $\mathcal{X}$ be a complete metric space. We call $G_\delta$ any countable intersection of open subsets of $\mathcal{X}$. According to Baire's theorem, a countable intersection of dense open sets is a dense $G_\delta$ set, which we refer to as a \textit{residual} set. A property is \textit{typical} or \textit{generic} in $\mathcal{X}$ if it is satisfied on at least a residual set. Note that for a finite (or countable) number of typical properties, the set possessing all of these properties remains a countable intersection of dense open sets, and therefore, it is dense. This says that the typical property behaves well under intersection, and as a consequence, allows to talk about generic conservative homeomorphisms and list their different properties. 


\subsection{Metric mean dimension}

Let $\mathfrak{X}$ be a compact metric space and denote its metric by $d$. Given $T: \mathfrak{X} \to \mathfrak{X}$ a continuous map and $k \in \mathbb{N}$, define the dynamical distance $d_k$ as 
\[
d_k(x, y) = \max \{d\big(T^i(x), T^i(y)\big), 0 \leq i \leq k - 1\},
\]
where $T^0$ is the identity. It is well know that $d_k$ still a distance function on $\mathfrak{X}$ and generates the same topology as $d$. For $x \in \mathfrak{X}$, we call $B_{(k, \varepsilon)}(x) = \{y \in \mathfrak{X}; d_k(x, y) < \varepsilon\}$ the \textit{$(k, \varepsilon)$-dynamical ball}.

Since $\mathfrak{X}$ is compact, the number $N(T, k, \varepsilon)$, defined as the minimal cardinality of a covering of $\mathfrak{X}$ by $(k, \varepsilon)$-dynamical balls, is finite.

Call $A \subset \mathfrak{X}$ a \textit{$(k, \varepsilon)$-separated set} if to any distinct points $x, y \in A$, $d_k(x, y) \geq \varepsilon$. Denote by $S(T, k, \varepsilon)$ the maximal cardinality of a $(k, \varepsilon)$-separated set, that is finite by the compactness of $\mathfrak{X}$. In order to clarify further notations, define 
\[
\text{Sep}(T, \varepsilon) = \lim\limits_{k \to \infty} \frac{\log S(T, k, \varepsilon)}{k}.
\]

This limit always exists because $\log S(T, k, \varepsilon)$ is a subadditive function of $k$. Note that if $\varepsilon_1 < \varepsilon_2$, then $\text{Sep}(T, \varepsilon_1) \geq \text{Sep}(T, \varepsilon_2)$. The topological entropy $h(T)$ is the limit of $\text{Sep}(T, \varepsilon)$ as $\varepsilon \to 0$. When the topological entropy is infinite, we are interested in the growth of $\text{Sep}(T, \varepsilon)$. This motivates the definition of upper and lower metric mean dimension.

The \textit{lower metric mean dimension} and the \textit{upper metric mean dimension} of $(\mathfrak{X}, d, T)$ are defined by
\[
\underline{\text{mdim}}_M(\mathfrak{X}, d, T) = \liminf\limits_{\varepsilon \to 0} \frac{\text{Sep}(T, \varepsilon)}{-\log\varepsilon}
\;\;\;\text{and}\;\;\;\overline{\text{mdim}}_M(\mathfrak{X}, d, T) = \limsup\limits_{\varepsilon \to 0} \frac{\text{Sep}(T, \varepsilon)}{-\log\varepsilon},
\]
respectively. By Remark 4 in \cite{velozo_velozo_2017}, when the metric space $\mathfrak{X}$ is a manifold of dimension $n$, then 
\[
\overline{\text{mdim}}_M(\mathfrak{X}, d, T) \leq n.
\]

This fact is crucial for the proof of Theorem \ref{conservative-homeo-full-metric-mean-dimension}.


\subsection{Main results}

The origins of Ergodic Theory and Dynamical Systems lie in the study of physical systems that evolve over time as solutions to certain differential equations. Taking conservation laws into account, Liouville's Theorem ensures that for Hamiltonian systems, this flow has an invariant measure. Thus, one is naturally led from the underlying physics to the study of measure-preserving manifold homeomorphisms or diffeomorphisms.

Many researchers studied typical properties for conservative homeomorphisms. In \cite{oxtoby_ulam_1941}, Oxtoby and Ulam proved that ergodicity is generic for measure preserving homeomorphisms on a compact manifold. Katok and Stepin \cite{katok_stepin_1970} proved that weak mixing homeomorphisms are also generic. In the 1970's, more work was done by S. Alpern, V. S. Prasad, and P. Lax (see the historical preface by Alpern and Prasad in \cite{alpern_prasad_04}). 

Not long ago, P. Guihéneuf and T. Lefeuvre proved in \cite{guiheneuf_lefeuvre_2018} that the specification property is generic in $\text{Homeo}(X, \mu)$, which does not hold generically in the dissipative case. Moreover, Guihéneuf proved in \cite{guiheneuf_2012}, Chapter 3, that typical conservative homeomorphisms have infinite topological entropy.

However, based on the findings in \cite{carvalho_rodrigues_varandas_2020}, it appears that genericity with regard to the totality of the metric mean dimension is still lacking in the conservative case. This work aims to bridge this gap by presenting the following result:

\begin{thmA}\label{conservative-homeo-full-metric-mean-dimension}
For $X$ a compact connected smooth boundaryless manifold of dimension $n \geq 2$, endowed with a Riemannian distance $d$, there exists a residual subset $\mathfrak{R} \subset \text{Homeo}(X, \mu)$ such that
\[
\overline{\text{mdim}}_M(X, f, d) = n,\;\; \text{ for all } f \in \mathfrak{R}.
\]
\end{thmA}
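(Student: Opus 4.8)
The plan is to show that, for each fixed $\varepsilon>0$ and each large $N\in\mathbb{N}$, the set
$$
\mathcal{U}_{N,\varepsilon}=\Big\{f\in\mathrm{Homeo}(X,\mu): \tfrac{\log S(f,k,\varepsilon)}{k}> \big(n-\tfrac1N\big)(-\log\varepsilon)\ \text{for some }k\Big\}
$$
is open and dense in $\mathrm{Homeo}(X,\mu)$, and then take $\mathfrak{R}=\bigcap_{N}\bigcap_{m}\mathcal{U}_{N,\varepsilon_m}$ over a sequence $\varepsilon_m\to0$. Openness is the easy half: for a fixed $f$ and fixed $k$, the quantity $S(f,k,\varepsilon)$ is lower semicontinuous in $f$ in the $C^0$-topology (a $(k,\varepsilon)$-separated set for $f$ remains $(k,\varepsilon')$-separated for all $g$ sufficiently $C^0$-close, with $\varepsilon'$ slightly smaller, which one absorbs by a mild shrinking of $\varepsilon$), so membership in $\mathcal{U}_{N,\varepsilon}$ is witnessed by a single $k$ and survives small perturbations. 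Once density is established, Baire's theorem gives that $\mathfrak{R}$ is residual, and for $f\in\mathfrak{R}$ we get $\mathrm{Sep}(f,\varepsilon_m)/(-\log\varepsilon_m)\ge n-\tfrac1N$ for all $N$ and infinitely many $m$, hence $\overline{\mathrm{mdim}}_M(X,f,d)\ge n$; the reverse inequality is Remark 4 of \cite{velozo_velozo_2017} quoted in the excerpt.

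The heart of the matter is density: given $f\in\mathrm{Homeo}(X,\mu)$, $\delta>0$, $\varepsilon>0$ and $N$, I must produce $g\in\mathrm{Homeo}(X,\mu)$ with $D(f,g)<\delta$ and $\log S(g,k,\varepsilon)/k>(n-\tfrac1N)(-\log\varepsilon)$ for some $k$. The idea is to find a small $n$-dimensional cube (a chart domain) $C\subset X$ on which, after a conservative conjugacy, $f$ looks like the identity-near-$C$, and then to plant inside $C$ a conservative homeomorphism supported in $C$ that behaves like a high-entropy "horseshoe-like" map at scale $\varepsilon$ — concretely, a volume-preserving map that, on a subcube of side $\sim\varepsilon$, realizes a full shift on roughly $\varepsilon^{-(n-1/(2N))}$ symbols over a time window of length $k$, so that it produces $\exp\!\big(k(n-\tfrac1N)(-\log\varepsilon)\big)$ many $(k,\varepsilon)$-separated points. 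This is exactly the mechanism behind the dissipative result of Carvalho–Rodrigues–Varandas \cite{carvalho_rodrigues_varandas_2020}; the conservative constraint is handled because the relevant model maps (permutations of a fine grid of subcubes, realized by small-support homeomorphisms that are identity near $\partial C$) can be taken volume-preserving, and by the Oxtoby–Ulam theorem together with Alpern's fragmentation/conjugacy techniques (see \cite{alpern_prasad_04}) the local chart can be chosen so that Lebesgue measure in the chart corresponds to $\mu$. Care is needed to keep $D(f,g)<\delta$: since $C$ has diameter $<\delta$ and $g=f$ off $C$ (and similarly for inverses), the perturbation is $C^0$-small automatically, and the conjugacy used to straighten $f$ near $C$ must itself be chosen $\delta$-small, which is possible because $\mu$ is non-atomic and locally positive so arbitrarily small charts of positive measure exist.

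The main obstacle I expect is the interaction of two constraints at once: the perturbation must be simultaneously $C^0$-small and measure-preserving, and the high-complexity local model must be a genuine homeomorphism of $X$ (identity near the boundary of its support) rather than just a map of a cube — reconciling "volume-preserving" with "enough $(k,\varepsilon)$-separated orbits in bounded time" is where the construction is delicate, because a measure-preserving map of a cube cannot expand volume, so the $\varepsilon^{-(n-1/N)}$-fold separation must be engineered by a shift-like rearrangement of many $\varepsilon$-scale pieces (a "permutation of boxes" that stirs them so two points in different boxes are separated within $k$ iterates) rather than by hyperbolic stretching. Making this rearrangement precise, volume-preserving, $C^0$-close to a power of the identity, and verifying the separation count is the technical core; everything else (openness, Baire, the upper bound) is routine given the results already cited.
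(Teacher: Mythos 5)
Your skeleton is broken before the density step. Membership in your $\mathcal{U}_{N,\varepsilon}$ only requires $\log S(f,k,\varepsilon)/k$ to be large for \emph{some} $k$, but $\mathrm{Sep}(f,\varepsilon)$ is the limit in $k$ (by the subadditivity the paper invokes it is in fact the infimum), so a single large value at one $k$ gives no lower bound on $\mathrm{Sep}(f,\varepsilon_m)$, and the deduction $\mathrm{Sep}(f,\varepsilon_m)/(-\log\varepsilon_m)\ge n-\tfrac1N$ does not follow. Worse, with the witness $k=1$ \emph{every} homeomorphism lies in $\mathcal{U}_{N,\varepsilon}$ once $\varepsilon$ is small, since the static $\varepsilon$-separated capacity of an $n$-manifold already exceeds $\varepsilon^{-(n-1/N)}$; hence your $\mathfrak{R}$ contains the identity, whose upper metric mean dimension is $0$. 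What is needed is an open condition forcing $S(g,k,\varepsilon)\ge M^k$ for \emph{all} $k$ simultaneously. This is precisely what the paper's chained pseudo-horseshoes provide: finitely many Markovian intersections along a periodic orbit persist under $C^0$-perturbation (Remark \ref{remark-c0-perturbation-markov}) and compose under iteration, so every $g$ in a whole neighborhood satisfies $S(g,\ell,C\varepsilon_k)\ge (2\delta/\varepsilon_k)^{n\ell}$ for every $\ell$ (Proposition \ref{separating_sets_pseudo_horseshoe}), and the bound on $\mathrm{Sep}(g,\varepsilon_k)$ passes to the residual set.

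The density mechanism you propose also fails on two counts. First, a permutation of a fine grid of subcubes cannot generate the needed complexity: for such a map every point in a box follows the deterministic itinerary of its box, so the number of $(k,\varepsilon)$-distinguishable itineraries is bounded by the number of boxes, $S(g,k,\varepsilon)=O(\varepsilon^{-n})$ uniformly in $k$, and the scale-$\varepsilon$ exponential growth rate is essentially zero. Your worry that measure preservation forbids hyperbolic stretching is unfounded: determinant-one hyperbolic matrices such as $T(u)=(\theta^{n-1}u_1,u_2/\theta,\dots,u_n/\theta)$ expand one direction while contracting the others, and this is exactly the stretching the paper uses to create Markovian intersections, made $\mu$-conservative by transferring $\mu$ to Lebesgue in a bi-Lipschitz chart via the Homeomorphic Measures Theorem and gluing with the local modification lemma. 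Second, planting the model in one small cube $C$ with $g=f$ off $C$ gives no control after the first iterate: generically $f(C)\cap C=\varnothing$ and orbits need not return to $C$, so no separation accumulates over long time windows. The paper resolves this recurrence issue by first producing a periodic point (Lemma \ref{extension-finite-maps-lemma} plus Poincar\'e recurrence) and then chaining pseudo-horseshoes along the whole periodic orbit (Lemma \ref{existence-chained-pseudo-horseshoe}) so that the Markov structure closes up; moreover the charts are taken bi-Lipschitz with a uniform constant, which matters because metric mean dimension depends on the metric --- a point your appeal to Oxtoby--Ulam--Alpern conjugacy glosses over.
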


Our proof differs from most proofs concerning genericity in the conservative case: it does not relies on the use of Oxtoby-Ulam-Brown's theorem (see \cite{brown_1961}) because the metric mean dimension depends on the Riemannian metric and the chart, guaranteed by the theorem, that cover the manifold is not necessarily bi-Lipschitz. But it is somewhat inspired by the construction of horseshoes in \cite{guiheneuf_2012} and in \cite{carvalho_rodrigues_varandas_2020}.

The following main result answers a problem proposed in \cite{velozo_velozo_2017,} regarding the existence of a measure of 'maximal metric mean dimension'. In fact, many approaches to construct a "measure theoretic mean dimension" fails, see \cite{lindenstrauss_tsukamoto_2018} for a detailed explanation. However, there are alternatives for the metric mean dimension, as the variational principle introduced in \cite{lindenstrauss_tsukamoto_2018} and \cite{velozo_velozo_2017}.

Precisely, let $\mathcal{M}_T(\mathfrak{X})$ be the space of $T$-invariant Borel probability measures on $\mathfrak{X}$. For $\delta \in (0,1)$, $k \in \mathbb{N}$ and $\varepsilon > 0$, define $N_\nu(T, k, \varepsilon, \delta)$ as the minimum number of $(k, \varepsilon)$-dynamical balls needed to cover a set of $\nu$-measure strictly bigger than $1 - \delta$, where $\nu \in \mathcal{M}_T(\mathfrak{X})$. Set
\[
h_\nu(\varepsilon, T, \delta) = \limsup\limits_{k \to \infty} \frac{\log N_\nu(T, k, \varepsilon, \delta)}{k}.
\]

It was proven by Katok in \cite{katok_1980} that the measure-theoretic entropy $h_\nu(T) = \lim_{\varepsilon \to 0} h_\nu(\varepsilon, T, \delta)$. In particular, this limit does not depend on $\delta \in (0, 1)$. The classical variational principle is $h(T) = \sup_{\nu \in \mathcal{M}_f(X)}h_\nu(T)$.

In \cite{velozo_velozo_2017}, the authors proved the following variational principle.

\begin{theorem}\label{variational-principle-velozo}
    Let $(\mathfrak{X}, d)$ be a compact metric space and $T: \mathfrak{X} \to \mathfrak{X}$ continuous. Then
    \[
    \overline{\text{mdim}}_M(\mathfrak{X}, T, d) = \limsup\limits_{\varepsilon \to 0} \frac{\sup_\delta\sup_{\nu \in \mathcal{M}_f(X)}h_\nu(\varepsilon, f, \delta)}{-\log \varepsilon}.
    \]
\end{theorem}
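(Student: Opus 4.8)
The plan is to prove the two inequalities separately. Write $A(\varepsilon):=\sup_{\delta}\sup_{\nu\in\mathcal{M}_T(\mathfrak{X})}h_\nu(\varepsilon,T,\delta)$, so that the claim is $\overline{\text{mdim}}_M(\mathfrak{X},T,d)=\limsup_{\varepsilon\to 0}A(\varepsilon)/(-\log\varepsilon)$. One direction is immediate: every cover of $\mathfrak{X}$ by $(k,\varepsilon)$-dynamical balls is, in particular, a cover of a set of $\nu$-measure $1>1-\delta$, and a maximal $(k,\varepsilon)$-separated set is $(k,\varepsilon)$-spanning, so $N_\nu(T,k,\varepsilon,\delta)\le N(T,k,\varepsilon)\le S(T,k,\varepsilon)$; taking $\limsup_k\frac1k\log(\cdot)$ gives $h_\nu(\varepsilon,T,\delta)\le\text{Sep}(T,\varepsilon)$ for all $\nu$ and all $\delta$, hence $A(\varepsilon)\le\text{Sep}(T,\varepsilon)$ and $\limsup_{\varepsilon\to0}A(\varepsilon)/(-\log\varepsilon)\le\overline{\text{mdim}}_M(\mathfrak{X},T,d)$.

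For the reverse inequality I would localize Misiurewicz's proof of the classical variational principle to a fixed scale $\varepsilon$. For each $n$ fix a maximal $(n,\varepsilon)$-separated set $E_n$ (so $\abs{E_n}=S(T,n,\varepsilon)$), set $\sigma_n:=\abs{E_n}^{-1}\sum_{x\in E_n}\delta_x$, $\mu_n:=\frac1n\sum_{i=0}^{n-1}T^i_*\sigma_n$, and pass to a subsequence along which $\mu_{n_j}\to\nu$ weak$^*$; then $\nu\in\mathcal{M}_T(\mathfrak{X})$. Choose a finite partition $\mathcal{P}$ with $\text{diam}\,\mathcal{P}<\varepsilon$ and $\nu(\partial\mathcal{P})=0$ (the common refinement of finitely many balls of radius $<\varepsilon/2$ with $\nu$-null spheres). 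Since $E_n$ is $(n,\varepsilon)$-separated and $\text{diam}\,\mathcal{P}<\varepsilon$, distinct points of $E_n$ lie in distinct atoms of $\mathcal{P}^{(n)}:=\bigvee_{i=0}^{n-1}T^{-i}\mathcal{P}$, so $H_{\sigma_n}(\mathcal{P}^{(n)})=\log\abs{E_n}$. The usual decomposition of $\{0,\dots,n-1\}$ into runs of length $q$, together with concavity of $W\mapsto H_W(\mathcal{P}^{(q)})$, yields $\frac qn\log\abs{E_n}\le H_{\mu_n}(\mathcal{P}^{(q)})+\frac{2q^2\log\#\mathcal{P}}{n}$ for every fixed $q$; letting $n=n_j\to\infty$ and using $\nu(\partial\mathcal{P}^{(q)})=0$ to pass to the limit in the entropy term gives $\text{Sep}(T,\varepsilon)\le\frac1q H_\nu(\mathcal{P}^{(q)})$ for all $q$.

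It remains to convert this into a lower bound for $A(\varepsilon')$ with $\varepsilon'$ of the same logarithmic order as $\varepsilon$. A short pigeonhole/concavity estimate for Shannon entropy shows that if $M$ atoms of $\mathcal{P}^{(q)}$ carry $\nu$-mass $>1-\delta$, then $H_\nu(\mathcal{P}^{(q)})\le\log M+\delta\, q\log\#\mathcal{P}+O(1)$, so the least number of atoms of $\mathcal{P}^{(q)}$ needed to cover a set of $\nu$-measure $>1-\delta$ is at least $\exp\!\big(q(\text{Sep}(T,\varepsilon)-\delta\log\#\mathcal{P})-O(1)\big)$. To pass from atoms to dynamical balls one should take $\mathcal{P}$ geometrically regular — e.g. the Voronoi partition of a maximal $\tfrac\varepsilon3$-separated set, whose atoms each contain a ball of radius comparable to $\varepsilon$ — so that by a volume-comparison argument at small scale there is a constant $L=L(\dim\mathfrak{X})$, independent of $\varepsilon$, with the property that every ball of some fixed radius $\varepsilon'\asymp\varepsilon$ meets at most $L$ atoms of $\mathcal{P}$; consequently every $(q,\varepsilon')$-dynamical ball meets at most $L^q$ atoms of $\mathcal{P}^{(q)}$, so $M$ such balls covering a set of $\nu$-measure $>1-\delta$ are met by at most $ML^q$ atoms of $\mathcal{P}^{(q)}$, whence $N_\nu(T,q,\varepsilon',\delta)\ge L^{-q}\exp\!\big(q(\text{Sep}(T,\varepsilon)-\delta\log\#\mathcal{P})-O(1)\big)$. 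Taking $\limsup_q\tfrac1q\log(\cdot)$ and then $\sup_\delta$ (which absorbs the $\delta\log\#\mathcal{P}$ term) gives $A(\varepsilon')\ge\sup_\delta h_\nu(\varepsilon',T,\delta)\ge\text{Sep}(T,\varepsilon)-\log L$; dividing by $-\log\varepsilon$, using $-\log\varepsilon'=-\log\varepsilon+O(1)$, and letting $\varepsilon\to0$ yields $\overline{\text{mdim}}_M(\mathfrak{X},T,d)\le\limsup_{\eta\to0}A(\eta)/(-\log\eta)$, which completes the proof.

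The step I expect to be the main obstacle is exactly this last conversion from atom-counting to $(q,\varepsilon')$-ball-counting: the outer $\limsup$ over $\varepsilon$ only tolerates a bounded multiplicative change of scale and an additive error of order $o(-\log\varepsilon)$, so the number $L$ of atoms a small ball can meet must be bounded uniformly in $\varepsilon$ — this is what forces the geometrically regular choice of $\mathcal{P}$ and brings in the geometry of $\mathfrak{X}$, and it is where a fully general compact metric space requires the more delicate treatment of \cite{velozo_velozo_2017}. A secondary point needing care is the bookkeeping of the three parameters $\varepsilon$, $\varepsilon'$ and $\delta$: the supremum over $\delta$ in the statement is essential, as it is precisely what absorbs the $\varepsilon$-dependent quantity $\log\#\mathcal{P}\asymp n\log(1/\varepsilon)$ coming from the number of partition elements at scale $\varepsilon$.
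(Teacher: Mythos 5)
You should first note that the paper offers no proof of this statement at all: it is quoted verbatim from \cite{velozo_velozo_2017}, so only the internal soundness of your argument can be judged. The easy inequality is correct, and so is the ``Misiurewicz at fixed scale'' part: the construction of $\nu$ as a weak$^*$ limit of the averaged empirical measures on maximal $(n,\varepsilon)$-separated sets, the choice of a partition $\mathcal{P}$ of diameter $<\varepsilon$ with $\nu$-null boundaries, the conclusion $\text{Sep}(T,\varepsilon)\le\frac1q H_\nu(\mathcal{P}^{(q)})$ for all $q$, the Shannon pigeonhole bound for atoms carrying mass $>1-\delta$, and the observation that the supremum over $\delta$ absorbs the $\delta\log\#\mathcal{P}$ term are all standard and correctly deployed.

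The genuine gap is exactly the step you flag, and it is not a removable technicality in the stated generality. Your passage from atoms to dynamical balls needs a bound $L$ with $\log L=o(-\log\varepsilon)$ on the number of atoms of a diameter-$\varepsilon$ partition met by a ball of radius $\varepsilon'\asymp\varepsilon$; your appeal to ``volume comparison'' and to $L=L(\dim\mathfrak{X})$ presupposes a doubling (in particular finite-dimensional) space, whereas the theorem is stated for an arbitrary compact metric space --- and the non-doubling case is precisely where metric mean dimension is most interesting. Concretely, take the Hilbert cube $[0,1]^{\mathbb{Z}}$ with $d(x,y)=\sum_{n}2^{-\lvert n\rvert}\lvert x_n-y_n\rvert$, the phase space of the shift: using $\approx\log_2(1/\varepsilon)$ coordinates and a constant-weight code one places $(1/\varepsilon)^{c}$ points inside a ball of radius $2\varepsilon$ that are pairwise more than $\varepsilon$ apart, so \emph{every} partition into sets of diameter $<\varepsilon$ has some ball of radius $2\varepsilon$ meeting at least $(1/\varepsilon)^{c}$ atoms. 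Hence $\log L\ge c\log(1/\varepsilon)$ no matter how regular the partition, your final estimate $A(\varepsilon')\ge\text{Sep}(T,\varepsilon)-\log L$ only yields $\overline{\text{mdim}}_M\le \limsup_{\eta\to0}A(\eta)/(-\log\eta)+c$, and the claimed equality is not reached. Your argument does suffice for doubling spaces, hence for the compact Riemannian manifold to which this paper applies the theorem; but as a proof of the statement itself it is incomplete, and closing the general case requires the different route of \cite{velozo_velozo_2017} (comparisons of Katok's $\varepsilon$-entropy with rate--distortion functions in the spirit of \cite{lindenstrauss_tsukamoto_2018}) rather than a multiplicity count.
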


The above theorem motivates our second main result, which states that the measure $\mu$ is a measure of maximal metric mean dimension.

\begin{thmB}\label{measure-maximal-metric-mean-dimension}
    For all $f$ in $\mathfrak{R}$, $\mu$ is a measure of maximal metric mean dimension, that is, 
    \[
    \overline{\text{mdim}}_M(X, f, d) = \limsup\limits_{\varepsilon \to 0} \frac{\sup_\delta h_\mu(\varepsilon, f, \delta)}{-\log \varepsilon} = n.
    \]
\end{thmB}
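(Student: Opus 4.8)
The plan is to read off Theorem~\ref{measure-maximal-metric-mean-dimension} from Theorem~\ref{conservative-homeo-full-metric-mean-dimension} and the variational principle of Theorem~\ref{variational-principle-velozo}, the only new ingredient being a lower bound for the $\mu$-refined covering numbers $N_\mu(f,k,\varepsilon,\delta)$.

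The upper bound is immediate: since $f\in\mathfrak R$ preserves $\mu$ we have $\mu\in\mathcal{M}_f(X)$, so $h_\mu(\varepsilon,f,\delta)\le\sup_{\nu\in\mathcal{M}_f(X)}h_\nu(\varepsilon,f,\delta)$ for all $\varepsilon,\delta$; dividing by $-\log\varepsilon$ and letting $\varepsilon\to0$, Theorems~\ref{variational-principle-velozo} and~\ref{conservative-homeo-full-metric-mean-dimension} give
\[
\limsup_{\varepsilon\to0}\frac{\sup_\delta h_\mu(\varepsilon,f,\delta)}{-\log\varepsilon}\ \le\ \limsup_{\varepsilon\to0}\frac{\sup_\delta\sup_{\nu}h_\nu(\varepsilon,f,\delta)}{-\log\varepsilon}\ =\ \overline{\text{mdim}}_M(X,f,d)\ =\ n .
\]

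For the reverse inequality I would go back into the construction that produces $\mathfrak R$. It should yield, for every $f\in\mathfrak R$, a sequence $\varepsilon_j\to0$ and integers $k_j,N_j$ with $\log N_j/(k_j(-\log\varepsilon_j))\to n$, coming from a ``horseshoe at scale $\varepsilon_j$'': a Borel set $\Lambda_j$ with $\mu(\Lambda_j)=c_j>0$ that, for every $m\in\mathbb{N}$, decomposes as a disjoint union $\Lambda_j=\bigsqcup_w\Lambda_j^w$ of $N_j^m$ ``cylinders'' indexed by length-$m$ words $w$, such that (i) points lying in distinct cylinders are at $d_{k_jm}$-distance $\ge 2\varepsilon_j$, and (ii) the $\mu(\Lambda_j^w)$ are mutually comparable, say $\mu(\Lambda_j^w)\ge c_j/(C_jN_j^m)$ with $C_j$ independent of $m$. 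I expect the cylinders to be unions of atoms of a $\mu$-uniform partition of mesh $<\varepsilon_j$ --- the conservative analogue of the Carvalho--Rodrigues--Varandas horseshoes --- which would make (ii) automatic; verifying this compatibility between the complexity-producing perturbation and $\mu$ is the real content of the argument. Granting (i)--(ii), fix $\delta_j\in(0,\,c_j/(2C_j))$. A $(k_jm,\varepsilon_j)$-dynamical ball has $d_{k_jm}$-diameter $<2\varepsilon_j$, so by (i) it meets at most one cylinder; hence any family $\mathcal B$ of fewer than $N_j^m/2$ such balls misses more than $N_j^m/2$ cylinders, and by (ii)
\[
\mu\big(X\setminus\textstyle\bigcup\mathcal B\big)\ \ge\ \frac{N_j^m}{2}\cdot\frac{c_j}{C_jN_j^m}\ =\ \frac{c_j}{2C_j}\ >\ \delta_j ,
\]
so $\mu(\bigcup\mathcal B)<1-\delta_j$ and $\mathcal B$ is inadmissible in the definition of $N_\mu$. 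Therefore $N_\mu(f,k_jm,\varepsilon_j,\delta_j)\ge N_j^m/2$ for every $m$, and letting $m\to\infty$,
\[
\sup_\delta h_\mu(\varepsilon_j,f,\delta)\ \ge\ h_\mu(\varepsilon_j,f,\delta_j)\ \ge\ \limsup_{m\to\infty}\frac{\log\big(N_j^m/2\big)}{k_jm}\ =\ \frac{\log N_j}{k_j},
\]
whence $\limsup_{\varepsilon\to0}\frac{\sup_\delta h_\mu(\varepsilon,f,\delta)}{-\log\varepsilon}\ge\limsup_{j\to\infty}\frac{\log N_j}{k_j(-\log\varepsilon_j)}=n$. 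Combined with the upper bound and Theorem~\ref{conservative-homeo-full-metric-mean-dimension}, this is exactly Theorem~\ref{measure-maximal-metric-mean-dimension}.

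The one genuine obstacle is property~(ii). Since $\mu$ is only a good measure --- not bi-Lipschitz equivalent to Lebesgue in any chart --- comparable $\mu$-mass of the horseshoe branches is not a consequence of their geometry and must be built into the perturbation; this should be possible using that $\mu$ is non-atomic and that $\sup_x\mu(B(x,r))\to0$ as $r\to0$, so that the scale-$\varepsilon_j$ pieces can be chosen of $\mu$-measure $\approx c_j/N_j^m$, in the same spirit of local interior modifications on which the proof of Theorem~\ref{conservative-homeo-full-metric-mean-dimension} already relies, and without invoking Oxtoby--Ulam--Brown. Once the construction of $\mathfrak R$ is recorded in this quantitative form, the rest is the soft counting argument above.
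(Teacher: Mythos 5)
Your upper bound and the counting you perform \emph{after} granting (i)--(ii) are fine, but the proposal hinges on an assumption that you neither prove nor can extract from the construction of $\mathfrak R$: property (ii), i.e.\ a single Borel set $\Lambda_j$ of positive $\mu$-measure which, for \emph{every} $m$, splits into $N_j^m$ cylinders of $\mu$-mass at least $c_j/(C_jN_j^m)$, uniformly in $m$. The residual set $\mathfrak R$ is built from chained pseudo-horseshoes, which are purely topological data: Markovian intersections of rectangles transported by bi-Lipschitz charts and by the homeomorphism $\xi$ supplied by the Homeomorphic Measures Theorem, and $\xi$ is not Lipschitz, so there is no control at all on the $\mu$-measure of deep cylinders. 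Even in the model case of an affine horseshoe with Lebesgue measure, the $m$-cylinders have measure decaying like the contraction rate to the power $m$ rather than like $N^{-m}$, and the set of points that stay in the rectangles for all times (which is where your $m$-independent $\Lambda_j$ would have to live) typically has measure zero. So (ii) is not a verification left to the reader; it would require a genuinely different and stronger construction, and since you explicitly defer it (``the real content'', ``should be possible''), the proposal is a conditional argument rather than a proof.

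The paper's own proof shows that no such measure uniformity is needed. For fixed $k$ and $m$ it takes the $(N_k)^m$ points of the separated set provided by Proposition~\ref{separating_sets_pseudo_horseshoe}, notes that the dynamical balls $B_{(m,\varepsilon_k/2)}(x_i)$ are pairwise disjoint nonempty open sets and hence of positive $\mu$-measure because $\mu$ is locally positive, and chooses $\delta$ to be the minimum of these finitely many measures; any set of $\mu$-measure $>1-\delta$ must then meet every one of the balls, so $N_\mu(f,m,\varepsilon_k/2,\delta)\ge (N_k)^m$, giving $h_\mu(\varepsilon_k/2,f,\delta)\ge\log N_k$ and, after dividing by $-\log(\varepsilon_k/2)$ and letting $k\to\infty$, the lower bound $n$; the $\sup_\delta$ in the statement is precisely what licenses taking $\delta$ this small. (To your credit, your insistence on a $\delta_j$ independent of $m$ --- the very thing that forces (ii) on you --- would, if it could be arranged, also tidy up a point the paper glosses over, namely that its $\delta_k$ as written depends on $m$; but as submitted, your route replaces a soft argument using only local positivity of $\mu$ by an unproven and substantially harder uniformity requirement on the construction.)
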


\subsection{Reading guide}

The paper is organized as follows. In Section \ref{section-conservative-toolbox}, we recall basic results from the theory of conservative homeomorphisms and prove that the set of homeomorphisms in $\text{Homeo}(X, \mu)$ with a periodic point is dense. In Section \ref{section-pseudo-horseshoes}, we recall the definition of Markovian intersection, define a pseudo-horseshoe, and provide a sufficient condition under which the conservative homeomorphism $f$ exhibits sufficiently large separated sets. In Section \ref{section-perturbation-lemmas}, we present the proof of several local conservative modification lemmas, which are crucial for constructing pseudo-horseshoes along a periodic orbit. Finally, in Section \ref{section-proof-of-the-main-theorems}, we prove both Theorems \ref{conservative-homeo-full-metric-mean-dimension} and \ref{measure-maximal-metric-mean-dimension}. 

\section{Conservative toolbox}\label{section-conservative-toolbox}

Endow $\mathbb{R}^n$ with the maximum norm, that is, for $u = (u_1, ..., u_n) \in \mathbb{R}^n$,
\[
\lVert u\rVert = \lVert (u_1, ..., u_n)\rVert = \max\limits_{1 \leq i \leq n} \lvert u_i\rvert.
\]
For $\delta > 0$, define $B^n_\delta = \{u \in \mathbb{R}^n, \lVert u \rVert \leq \delta\}$ and set $I^n := B^n_{\frac{1}{2}}$, a unitary cube. Also consider $\lambda$ the Lebesgue measure in $\mathbb{R}^n$. The following theorem, credited to Oxtoby and Ulam, can be found in \cite{alpern_prasad_04}, Chapter 9.

\begin{theorem}[Homeomorphic Measures Theorem]\label{homeomorphic-measure}
A Borel probability measure $\mu$ on the $n-$cube $I^n$ is homeomorphic to Lebesgue measure $\lambda$ on $I^n$ if and only if it is a good Borel probability measure. In other words, there is a homeomorphism $h: I^n \to I^n$ with $\mu = h_*\lambda$ if and only if the Borel probability measure $\mu$ is nonatomic, locally positive, and has $\mu(\partial I^n) = 0$. Furthermore, given any homeomorphism $g: I^n \to I^n$, we can choose $h$ to equal $g$ on $\partial I^n$; in particular, we can take $h$ to be the identity on the boundary.
\end{theorem}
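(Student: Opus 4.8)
The necessity is immediate. If $\mu=h_*\lambda$ with $h$ a homeomorphism of $I^n$, then $\mu(\{x\})=\lambda(h^{-1}\{x\})=0$; for $B\subset I^n$ non-empty open, $\mu(B)=\lambda(h^{-1}(B))>0$ since $h^{-1}(B)$ is non-empty open; and $\mu(\partial I^n)=\lambda(h^{-1}(\partial I^n))=\lambda(\partial I^n)=0$, because invariance of domain forces $h(\partial I^n)=\partial I^n$. The same three computations show that the push-forward of a good measure by \emph{any} homeomorphism of $I^n$ is again good. Granting this, I would reduce the whole statement to the following normalized form: \emph{for every good Borel probability measure $\mu$ on $I^n$ there is a homeomorphism $\psi\colon I^n\to I^n$ with $\psi|_{\partial I^n}=\mathrm{id}$ and $\psi_*\lambda=\mu$}. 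Indeed this gives the nontrivial implication ($\mu$ good $\Rightarrow$ $\mu\cong\lambda$), and the ``furthermore'' clause follows by applying it to the good measure $(g^{-1})_*\mu$ and setting $h=g\circ\psi$, which satisfies $h_*\lambda=\mu$ and $h|_{\partial I^n}=g|_{\partial I^n}$ (taking $g=\mathrm{id}$ recovers the boundary-fixing case).

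For the normalized statement the plan is the classical Oxtoby--Ulam dyadic-approximation scheme: build $\psi$ as a uniform limit $\psi=\lim_k H_k$, where $H_k=h_k\circ\cdots\circ h_1$, each $h_j$ is a homeomorphism of $I^n$ fixing $\partial I^n$, and at stage $k$ one has $(H_k)_*\lambda(C)=\mu(C)$ for every cell $C$ of a fixed nested sequence of grids $\mathcal P_k$ (with $\operatorname{diam}\mathcal P_k\to 0$, $\mathcal P_{k+1}$ refining $\mathcal P_k$, the grids perturbed so that all their hyperplanes are null for the finitely many measures relevant at each step). To go from $\mathcal P_k$ to $\mathcal P_{k+1}$ one requires $h_{k+1}$ to map each $C\in\mathcal P_k$ onto itself — so coarser agreements survive — and, inside $C$, to redistribute the good measure $\nu:=(H_k)_*\lambda|_C$ among the subcells $D_1,\dots,D_m\subset C$ of $\mathcal P_{k+1}$ so that $\nu\big(h_{k+1}^{-1}(D_i)\big)=\mu(D_i)$; this is consistent because $\sum_i\mu(D_i)=\mu(C)=\nu(C)$. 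The analytic core is thus a single \textbf{redistribution lemma}: given a good measure $\nu$ on a cube $C$ and positive reals $a_1,\dots,a_m$ with $\sum a_i=\nu(C)$ attached to a grid subdivision of $C$, there is a homeomorphism of $C$, identity on $\partial C$, pushing $\nu$ to a measure with mass $a_i$ on the $i$-th cell. This is proved by an elementary one-hyperplane-at-a-time ``sliding'' construction — homeomorphisms that are piecewise affine in a single coordinate move a dividing hyperplane to the level where the cumulative $\nu$-mass becomes correct — iterated over the grid, after a preliminary small perturbation making the dividing hyperplanes $\nu$-null.

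Finally one checks the limit is the object claimed. Since $h_{k+1}$ displaces each point only within the $\mathcal P_k$-cell containing it, $\lVert H_{k+1}-H_k\rVert_\infty\le\operatorname{diam}\mathcal P_k\to 0$, so $(H_k)$ is uniformly Cauchy with continuous limit $\psi$, and $(H_k)_*\lambda\to\psi_*\lambda$ weak-$*$; together with the null-grid choices, a routine weak-$*$ argument yields $\psi_*\lambda(C)=\mu(C)$ for every grid cell at every level, hence $\psi_*\lambda=\mu$ since these cells generate the Borel $\sigma$-algebra. \textbf{The main obstacle} is to guarantee that $\psi$ is a genuine homeomorphism rather than merely a continuous surjection: a careless choice of the $h_j$ can let $H_k^{-1}$ stretch regions of small $\mu$-mass without bound, so that the inverses fail to converge. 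The remedy is a symmetric strengthening of the induction that keeps both $H_k$ and $H_k^{-1}$ uniformly equicontinuous — for instance, by controlling along an auxiliary staggered sequence of grids the diameters of the images and preimages of cells so that they shrink summably — and local positivity of $\mu$ (no cell ever collapses onto a lower-dimensional set) is precisely what makes such control compatible with the mass-matching constraints. Once $(H_k)$ and $(H_k^{-1})$ are both uniformly Cauchy, $\psi^{-1}=\lim_k H_k^{-1}$, so $\psi\in\mathrm{Homeo}(I^n)$ fixes $\partial I^n$ and satisfies $\psi_*\lambda=\mu$; with the reductions of the first paragraph this proves the theorem.
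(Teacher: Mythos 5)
First, note that the paper does not prove this statement at all: it is the classical Oxtoby--Ulam Homeomorphic Measures Theorem, imported verbatim from \cite{alpern_prasad_04}, Chapter 9, and used as a black box. So your proposal can only be measured against the classical proof. The easy parts of your write-up are fine: the necessity direction (using invariance of domain to get $h(\partial I^n)=\partial I^n$), the observation that push-forwards of good measures by homeomorphisms are good, and the reduction of the ``furthermore'' clause to the boundary-fixing normalized statement via $h=g\circ\psi$ applied to $(g^{-1})_*\mu$ are all correct, and the grid-matching/redistribution architecture you describe is indeed the standard Oxtoby--Ulam/von Neumann approximation scheme.

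The genuine gap is exactly the step you flag and then wave at: making the uniform limit a homeomorphism. In your scheme $h_{k+1}$ is forced to preserve each cell of $\mathcal P_k$ (otherwise the stage-$k$ matching is destroyed), so the only available estimate for the inverses is $\lVert H_{k+1}^{-1}-H_k^{-1}\rVert_\infty\le\omega_{H_k^{-1}}\bigl(\operatorname{diam}\mathcal P_k\bigr)$, where $\omega_{H_k^{-1}}$ is the modulus of continuity of $H_k^{-1}$; both $H_k$ and $\mathcal P_k$ are already fixed when stage $k+1$ begins, so this quantity cannot be made small retroactively, and no choice of ``auxiliary staggered grids'' at later stages rescues the Cauchy property of $(H_k^{-1})$ within the induction as stated. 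What is actually needed is a strengthened inductive hypothesis, e.g.\ that $H_k^{-1}$ maps every $\mathcal P_k$-cell onto a set of diameter at most $\epsilon_k$ with $\sum_k\epsilon_k<\infty$; maintaining such a hypothesis requires an additional nontrivial ingredient (after each mass-matching step one must further compose with a Lebesgue-preserving, boundary-fixing homeomorphism that shrinks the preimages of the new cells, or run a genuine back-and-forth scheme alternating control of images and preimages), and your sentence that local positivity ``is precisely what makes such control compatible with the mass-matching constraints'' asserts rather than constructs this. Two smaller points: $\operatorname{diam}\mathcal P_k\to0$ alone does not give uniform Cauchyness of $(H_k)$ (you need summability, easily arranged); and the redistribution lemma's sliding maps must fix $\partial C$, so the moved hyperplanes bend near the boundary and the strict monotonicity of the cumulative mass function (hence solvability by the intermediate value theorem) needs local positivity of $\nu$ --- worth saying explicitly. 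As written, then, your proposal is a correct high-level outline of the classical proof whose central technical step --- the one that makes the theorem a theorem --- is missing.
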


The above theorem is fundamental so that we can perturb a conservative homeomorphism locally in a manifold via its charts, that is, we suppose that a chart takes an open set onto $I^n$. Now, the proof of the following theorem can be found in \cite{oxtoby_ulam_1941}.

\begin{lemma}[Extension of finite maps]\label{extension-finite-maps-lemma}
    Let $X$ be a manifold and $z_1, ..., z_m$ be $n$ different points of $X\setminus\partial X$ and $\Phi: \{z_1, ..., z_m\} \to X$ be an injective map such that $D(\Phi, x) < \delta$. Then there exists $\varphi \in \text{Homeo}(X, \mu)$ such that $\varphi(z_i) = \Phi(z_i)$, for all $1 \leq i \leq m$, and $D(\varphi, x) < \delta$. Moreover, given $m$ injective continuous paths $\gamma_i$ joining $z_i$ to $\Phi(z_i)$, the support of $\varphi$ can be chosen in any neighbourhood of the union of the paths $\gamma_i$.
\end{lemma}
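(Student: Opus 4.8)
The plan is to build $\varphi$ by concatenating finitely many conservative homeomorphisms, each of which pushes a single point $z_i$ along its prescribed path $\gamma_i$ to $\Phi(z_i)$ while being supported in an arbitrarily small neighbourhood of that path. First I would fix pairwise disjoint open neighbourhoods $U_1,\dots,U_m$ of the paths $\gamma_1,\dots,\gamma_m$; this is possible because the $\gamma_i$ are finitely many compact arcs and, after a preliminary small conservative perturbation of the target points if necessary, they can be taken disjoint (here one uses that $X$ has dimension $n\ge 2$, so the arcs can be chosen to avoid one another, and the images $\Phi(z_i)$ are distinct). Shrinking each $U_i$, we may assume each $U_i$ lies in the interior of $X$, has diameter less than $\delta$, and lies inside the prescribed neighbourhood of $\bigcup_i\gamma_i$. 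Since the $U_i$ are disjoint, the composition of the single-point maps will have support in $\bigcup_i U_i$ and will move no point by more than the diameter of the $U_i$, hence by less than $\delta$, giving $D(\varphi,\mathrm{id})<\delta$; and it will satisfy $\varphi(z_i)=\Phi(z_i)$ for every $i$.

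The core of the argument is therefore the case $m=1$: given an arc $\gamma$ from $z$ to $w=\Phi(z)$ contained in an open set $U$ in the interior of $X$ with $U$ of diameter less than $\delta$, construct $\psi\in\text{Homeo}(X,\mu)$ with $\psi(z)=w$, $\psi$ supported in $U$, and $D(\psi,\mathrm{id})<\delta$. To do this I would cover a slightly thickened neighbourhood of $\gamma$ inside $U$ by finitely many charts, and in each chart use the model situation in the cube $I^n$: it suffices to move the centre of the cube a small amount along a coordinate direction by a homeomorphism that is the identity on $\partial I^n$. Such a compactly supported homeomorphism of $I^n$ is easy to write down explicitly (a ``bump'' translation that tapers to the identity near the boundary). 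Invoking Theorem \ref{homeomorphic-measure} (the Homeomorphic Measures Theorem), we then conjugate this model map by a measure-straightening homeomorphism of the cube that is the identity on the boundary, turning the Lebesgue-preserving model map into a $\mu$-preserving one with the same support; patching these together along the chain of charts covering $\gamma$ pushes $z$ all the way to $w$ while staying supported in $U$. Composing over $i=1,\dots,m$ yields the desired $\varphi$.

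The main obstacle I expect is the bookkeeping needed to keep the support inside the prescribed neighbourhood of $\bigcup_i\gamma_i$ while simultaneously controlling the $C^0$-size of the perturbation and respecting the measure $\mu$ through the charts: the charts are not assumed compatible with $\mu$, so one cannot push $\mu$ around naively, and the Homeomorphic Measures Theorem must be applied chart-by-chart with the ``identity on the boundary'' clause used crucially so that the local modifications glue to a global homeomorphism. A secondary technical point is arranging the paths $\gamma_i$ (or rather disjoint tubular neighbourhoods of them) to be disjoint; in dimension $n\ge 2$ a generic small perturbation of the arcs achieves this, but if the statement insists the \emph{given} paths be used one instead shrinks to disjoint sub-neighbourhoods wherever the arcs are already disjoint and handles any crossings by an initial relabelling/reordering of the point-pushes. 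Once these localization issues are handled, the remaining estimates — that each local push moves points less than $\delta$ and preserves $\mu$ — are routine.
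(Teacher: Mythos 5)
Your overall strategy is the classical one behind the reference the paper gives for this lemma (the paper itself offers no proof, only the citation to \cite{oxtoby_ulam_1941}; the argument is also in Alpern--Prasad and Guih\'eneuf): reduce to moving a single point, push it along its path through a chain of small cubes by compactly supported model maps, use Theorem \ref{homeomorphic-measure} with the identity-on-the-boundary clause to turn each Lebesgue-preserving local push into a $\mu$-preserving one, and glue. So the skeleton is right, but two steps of your reduction have genuine gaps.

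First, the reduction to pairwise disjoint tubes $U_1,\dots,U_m$ fails exactly in the cases the lemma must cover. The map $\Phi$ is only assumed injective into $X$, so $\Phi(z_i)$ may equal some $z_j$; then $\gamma_i$ and $\gamma_j$ share an endpoint and no perturbation of the interiors of the arcs, and no ``relabelling/reordering of the point-pushes'', produces disjoint supports. Reordering cannot handle a cycle $z_{i_1}\mapsto z_{i_2}\mapsto\dots\mapsto z_{i_1}$ at all, since each push displaces the next point that still has to be moved; one must either push cyclically along a neighbourhood of the closed chain of paths, or track the displaced points and correct for them, as the classical proof does. Note also that your suggested ``preliminary small conservative perturbation of the target points'' is not available: the conclusion requires $\varphi(z_i)=\Phi(z_i)$ exactly. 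Second, your displacement estimate is unjustified in the ``moreover'' case: you shrink each $U_i$ to have diameter less than $\delta$, but $\gamma_i$ is an arbitrary injective path joining $z_i$ to $\Phi(z_i)$ and may have diameter far larger than $\delta$ (only $d(z_i,\Phi(z_i))<\delta$ is assumed). In fact, with support confined to a thin neighbourhood of a long hairpin-shaped arc one cannot keep all displacements below $\delta$: continuity forces some point near one end of the tube to be carried deep into it. So the bound $D(\varphi,\mathrm{id})<\delta$ must be obtained with freshly chosen short paths (possible because the Riemannian distance is a length metric, so $d(z_i,\Phi(z_i))<\delta$ yields paths of diameter $<\delta$), while for prescribed paths the displacement is controlled only by the chosen neighbourhood of $\bigcup_i\gamma_i$, not by $\delta$. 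A minor further point: when invoking Theorem \ref{homeomorphic-measure} chart by chart you should first choose the cubes so that their boundaries are $\mu$-null (possible since only countably many concentric cubes can have boundary of positive measure), otherwise the hypothesis of that theorem is not met.
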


Let $F \in \text{Homeo}(X, \mu)$ and $x \in X$. Given $B_\varepsilon(x) = \{y \in X; d(x, y) < \varepsilon\}$, a consequence of the Poincaré recurrence theorem is that $\mu$-almost every point in $B_\varepsilon(x)$ returns to $B_\varepsilon(x)$ by $F$. In particular, there is $y_0 \in B_\varepsilon(x)$ and $m \in \mathbb{N}$ such that $F^m(y_0) \in B_\varepsilon(x)$. Apply above lemma to construct $\varphi \in \text{Homeo}(X, \mu)$ such that $\varphi\circ F^m(y_0) = y_0$ and $D(\varphi, x) < 2\varepsilon$. Hence, $\varphi\circ F \in \text{Homeo}(X, \mu)$ has $y_0$ as a periodic point of period $m$. Thus, this also proves the density of homeomorphisms with a periodic point in the set of conservative homeomorphisms.

The above lemma will also be used to transform a set that is homeomorphic to a cube into another set that is equal to a cube in a finite, but sufficiently large, number of points.

\begin{definition}[Bicollared embeddings]
An embedding $\sigma_0$ of a manifold $X_0$ into a manifold $X$ is said to be \textit{bicollared} if there exists an embedding $\sigma: [-1, 1]\times X_0 \to X$ such that $\sigma_{\{0\}\times X_0} = \sigma_0$.
\end{definition}

The above definition is important because it avoids pathological embeddings such as the Alexander horned sphere, and is connected to the next lemma, that can be seen in \cite{guiheneuf_2012}, Chapter 3.

\begin{lemma}[Local modification]\label{local-modification-lemma}
Let $\sigma_1$, $\sigma_2$, $\tau_1$, $\tau_2$ be four bicollared embeddings of $\mathbb{S}^{n - 1}$ in $\mathbb{R}^n$, such that $\sigma_1$ is in the bounded connected component of $\sigma_2$ and $\tau_1$ is in the bounded connected component of $\tau_2$. Let $A_1$ be the bounded connected component of $\mathbb{R}\setminus\sigma_1$ and $B_1$ be the bounded connected component of $\mathbb{R}^n\setminus\tau_1$, $\Sigma$ the connected component of $\mathbb{R}^n\setminus(\sigma_1\cup\sigma_2)$ with boundaries $\sigma_1\cup\sigma_2$ and $\Lambda$ the connected component of $\mathbb{R}^n\setminus(\tau_1\cup\tau_2)$ with boundaries $\tau_1\cup\tau_2$, $A_2$ the unbounded connected component of $\mathbb{R}^n\setminus\sigma_2$ and $B_2$ the unbounded connected component of $\mathbb{R}^n\setminus\tau_2$.
Consider two homeomorphisms $h_i: A_i \to B_i$ such that they both preserve or reverse the orientation. Then, there exists a homeomorphism $h: \mathbb{R}^n \to \mathbb{R}^n$ such that $h \equiv h_1$ on $A_1$ and $h \equiv h_2$ on $A_2$.
Moreover, if we assume that $\lambda(A_1)$ = $\lambda(B_1)$ and $\lambda(\Sigma)$ = $\lambda(\Lambda)$ and the homeomorphisms $h_i$ are conservative, then $h$ can be chosen conservative too.
\end{lemma}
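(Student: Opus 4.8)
The idea is to leave $h_1$ and $h_2$ untouched and only fill the annular gap: set $h=h_1$ on $\overline{A_1}$, $h=h_2$ on $\overline{A_2}$, and $h=h_3$ on the closed region $\overline{\Sigma}$ in between, where $h_3\colon\overline{\Sigma}\to\overline{\Lambda}$ is a homeomorphism still to be produced. One should first observe that the very conclusion ``$h\equiv h_i$ on $A_i$'' presupposes that $h_1,h_2$ are restrictions of homeomorphisms of the closures $\overline{A_i}\to\overline{B_i}$ (this is what one always has in the applications), so that their boundary restrictions $\rho_1:=h_1|_{\sigma_1}\colon\sigma_1\to\tau_1$ and $\rho_2:=h_2|_{\sigma_2}\colon\sigma_2\to\tau_2$ are well defined homeomorphisms of $(n-1)$-spheres; all that is required of $h_3$ is then $h_3|_{\sigma_i}=\rho_i$, and granting such an $h_3$ a routine use of the pasting lemma (for $h$ and for $h^{-1}$, legitimate because the three pieces agree on the overlaps $\sigma_1,\sigma_2$) shows that $h$ is a homeomorphism of $\mathbb{R}^n$.

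To construct $h_3$ I would first normalize the picture (cf. \cite{guiheneuf_2012}, Chapter~3). By Brown's generalized Schoenflies theorem the bicollared sphere $\sigma_2$ is carried to the unit sphere by a homeomorphism of $\mathbb{R}^n$, and a second application inside the closed unit ball (rel boundary) moves $\sigma_1$ to the sphere of radius $\tfrac12$; performing the analogous normalizations on the target side for $\tau_1,\tau_2$ and conjugating $h_1,h_2$ by these homeomorphisms, which may be taken orientation-preserving, we may assume $\Sigma=\Lambda=\{\,\tfrac12<\lVert x\rVert<1\,\}$. Writing $x=r\omega$ with $r\in[\tfrac12,1]$, $\omega\in\mathbb S^{n-1}$, the two boundary restrictions become homeomorphisms $\widehat\rho_0,\widehat\rho_1\colon\mathbb S^{n-1}\to\mathbb S^{n-1}$, and the hypothesis that $h_1,h_2$ both preserve or both reverse the orientation of $\mathbb{R}^n$ translates, after tracking the induced orientations on $\sigma_1$ and $\sigma_2$, into $\widehat\rho_0$ and $\widehat\rho_1$ lying in the same path component of $\mathrm{Homeo}(\mathbb S^{n-1})$. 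Since every orientation-preserving homeomorphism of a sphere is isotopic to the identity (classical for $n-1\le 3$; in general via the stable homeomorphism theorem of Kirby and Quinn), this means precisely that $\widehat\rho_0$ is isotopic to $\widehat\rho_1$; choosing such an isotopy $(\widehat\rho_s)_{s\in[0,1]}$ and setting $h_3(r\omega):=r\,\widehat\rho_{\,2r-1}(\omega)$ gives a homeomorphism of $\{\,\tfrac12\le\lVert x\rVert\le1\,\}$ with the prescribed values on the two boundary spheres. Undoing the normalizations produces a homeomorphism $h_3^0\colon\overline{\Sigma}\to\overline{\Lambda}$ with $h_3^0|_{\sigma_i}=\rho_i$, which after pasting proves the first assertion.

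For the conservative statement I would not use the normalization (it does not respect volume) but only the topological map $h_3^0$ just obtained, correcting its volume by an Oxtoby--Ulam argument. The pushforward $\nu:=(h_3^0)_*(\lambda|_{\Sigma})$ is a good finite Borel measure on the compact manifold with boundary $\overline{\Lambda}\cong\mathbb S^{n-1}\times[0,1]$: it is non-atomic and locally positive because $h_3^0$ is a homeomorphism and $\lambda$ has these properties, and its total mass is $\lambda(\Sigma)=\lambda(\Lambda)$ by hypothesis. By the homeomorphic-measures theorem on a compact manifold with boundary, with prescribed boundary values (the manifold version of Theorem~\ref{homeomorphic-measure}; see \cite{alpern_prasad_04}, Chapter~9), there is a homeomorphism $\Theta\colon\overline{\Lambda}\to\overline{\Lambda}$ equal to the identity on $\partial\Lambda$ with $\Theta_*\nu=\lambda|_{\Lambda}$. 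Then $h_3:=\Theta\circ h_3^0$ is a homeomorphism $\overline{\Sigma}\to\overline{\Lambda}$ with $(h_3)_*(\lambda|_{\Sigma})=\lambda|_{\Lambda}$ and $h_3|_{\sigma_i}=\Theta|_{\tau_i}\circ\rho_i=\rho_i$. Pasting this $h_3$ with the conservative $h_1,h_2$ yields a conservative $h$: decomposing $\mathbb{R}^n=B_1\sqcup\overline{\Lambda}\sqcup B_2$, for a Borel set $E$ the preimage $h^{-1}(E)$ is the disjoint union of $h_1^{-1}(E\cap B_1)$, $h_3^{-1}(E\cap\overline{\Lambda})$ and $h_2^{-1}(E\cap B_2)$, whose $\lambda$-measures are $\lambda(E\cap B_1)$, $\lambda(E\cap\overline{\Lambda})$ and $\lambda(E\cap B_2)$ (using $h_3^{-1}(\overline{\Lambda})=\overline{\Sigma}$ together with $\lambda(A_1)=\lambda(B_1)$ and the fact, available in the applications, that the boundary spheres are $\lambda$-null).

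The main obstacle is the topological extension across the annulus: it rests on the genuinely non-trivial facts that a bicollared sphere can be straightened (generalized Schoenflies) and that orientation-preserving homeomorphisms of $\mathbb S^{n-1}$ are isotopic to the identity (stable homeomorphism theorem), together with the orientation bookkeeping showing that the hypothesis on $h_1,h_2$ is exactly what places $\rho_1$ and $\rho_2$ in a common isotopy class. Once the topological $h$ is in hand the conservative refinement is comparatively soft, the only point of care being to invoke the homeomorphic-measures theorem in the form valid on a manifold with boundary and with prescribed boundary behaviour.
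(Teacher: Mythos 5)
The paper offers no proof of this lemma: it is imported from \cite{guiheneuf_2012}, Chapter 3, and your argument follows the same standard route used there --- straighten the spheres, fill the annular region $\overline{\Sigma}$ by coning an isotopy between the two boundary restrictions (the joint orientation hypothesis being exactly what places them in one isotopy class), and then repair the volume of the filling map by an Oxtoby--Ulam correction rel boundary in the spirit of Theorem \ref{homeomorphic-measure}. So in substance your proposal is correct and matches the intended proof.

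Two points deserve flagging. First, the step you describe as ``a second application of Schoenflies inside the closed unit ball (rel boundary)'' is not a consequence of the generalized Schoenflies theorem alone: the assertion that the closed region between two disjoint bicollared $(n-1)$-spheres is homeomorphic to $\mathbb{S}^{n-1}\times[0,1]$ (equivalently, that $\sigma_1$ can be rounded by a homeomorphism of the closed unit ball which is the identity on its boundary) is the annulus theorem of Kirby and Quinn. Since you invoke the stable homeomorphism theorem anyway, the necessary deep input is present in your argument, but it enters already at the normalization, not only in the isotopy of sphere homeomorphisms. Second, the two caveats you make in passing --- that $h_1,h_2$ must extend to homeomorphisms of the closed regions, and that the boundary spheres must be $\lambda$-null for the final gluing to be measure-preserving --- are genuinely needed and not merely conveniences: any $h$ as in the conclusion carries $\sigma_1$ onto $\tau_1$, so if $\lambda(\sigma_1)\neq\lambda(\tau_1)$ (bicollared spheres of positive Lebesgue measure exist, e.g.\ Osgood-type curves for $n=2$) no conservative extension can exist, and the lemma as literally stated tacitly assumes these conditions; they do hold in the applications made in this paper. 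As you correctly observe, the Oxtoby--Ulam correction itself does not require nullity of the spheres, since $(h_3^0)_*\bigl(\lambda|_\Sigma\bigr)$ assigns no mass to $\partial\Lambda$; the nullity is used only when checking that the pasted map preserves $\lambda$ globally.
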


\begin{figure}[ht]
    \centering
    \begin{tikzpicture}[scale=0.8]
        \draw[rounded corners=4mm, fill=blue!30, draw=blue, line width=1pt] (0,0) to[in=190, out=330] (4,0) to[in=240] (4,4) to[in=20, out=175] (0,4) to[in=70, out=300] (0,0);

        \draw[rounded corners=2mm, fill=magenta!30, draw=magenta, line width=1pt] (1,1) to[in=160, out=20] (3,1) to[in=280] (3,3) to[in=330] (1,3) to[in=95, out=270] (1,1);

        \begin{scope}[xshift=7cm]
          \draw[rounded corners=4mm, fill=blue!30, draw=blue, line width=1pt] (0,0) to[in=160, out=30] (4,0) to[in=280] (4,4) to[in=330, out=185] (0,4) to[in=120, out=260] (0,0);

           \draw[rounded corners=2mm, fill=magenta!30, draw=magenta, line width=1pt] (0.5,1) to[in=180, out=30] (2.5,1) to[in=270, out=100] (2.5,3) to[in=20, out=170] (0.5,3) to[in=95, out=270] (0.5,1);            
        \end{scope}

        \draw[->, line width=0.8pt] (3.8,4.8) arc (110:60:4);
        \draw[->, thick] (2.8, 1.4) arc (240:300:5);

        \node[scale=1.3] at (2.8, 1.7) {$A_1$};
        \node[scale=1.3] at (8, 1.7) {$B_1$};
        \node[scale=1.2] at (5.5, 0.4) {$h_1$};
        \node[scale=1.3] at (3.5, 4.5) {$A_2$};
        \node[scale=1.3] at (7.5, 4.3)  {$B_2$};
        \node[scale=1.2] at (5.5, 5.3) {$h_2$};

        \node[scale=1.7] at (0.7, 0.2) {$\Sigma$};
        \node[scale=1.7] at (11, 0.7) {$\Lambda$};

        \node[scale=1.2] at (1.4, 2.5) {\textcolor{magenta}{$\sigma_1$}};
        \node[scale=1.2] at (0.7, 3.9) {\textcolor{blue}{$\sigma_2$}};
        \node[scale=1.2] at (9.1, 2.7) {\textcolor{magenta}{$\tau_1$}};
        \node[scale=1.2] at (10.7, 3.5) {\textcolor{blue}{$\tau_2$}};
    \end{tikzpicture}
    \caption{An example of bicollared embeddings and a conservative local modification.}
    \label{fig-local-modification}
\end{figure}
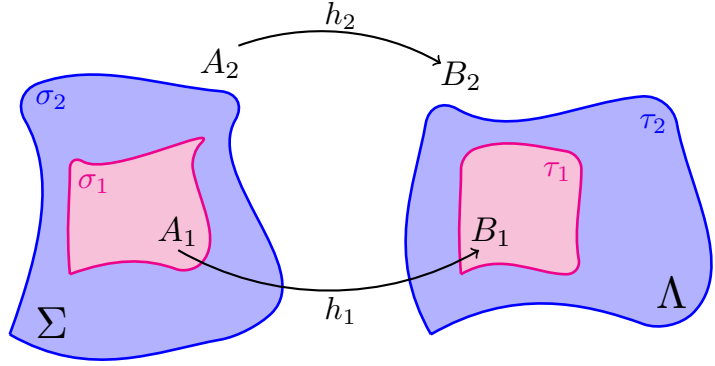

We use this lemma in the following way: let $A_2 = B_2$ and set $h_2$ as the identity in $\mathbb{R}^n$. Consider $h_1: A_1 \to B_1$ such that both $A_1$ and $B_1$ are contained in $A_2$ and $h_1$ is Lebesgue conservative. Hence, $\lambda(A_1)= \lambda(B_2)$, $\lambda(\Sigma) = \lambda(\Lambda)$, and there is $h: \mathbb{R}^n \to \mathbb{R}^n$ conservative such that $h \equiv h_2$ on $A_2$ and $h \equiv h_1$ on $A_1$.


\section{Pseudo-horseshoes}\label{section-pseudo-horseshoes}

\subsection{Markovian intersections}
The technique for constructing separated sets is based on Smale's horseshoe, which relies on Markov intersections and partitions.

\begin{definition}\label{def-rectangle}
We cal \textit{rectangle} a subset $R \subset \mathbb{R}^n$ such that 
\[
R = [a_1, b_1]\times ...\times [a_n, b_n],
\]
where $a_i < b_i$ are real numbers. A \textit{face} of $R$ is one of the $(n-1)$-dimensional rectangles constituting the boundary of $R$. We call \textit{horizontal} the faces 
\[
\begin{matrix}
   R^- = [a_1, b_1]\times ...\times [a_{n - 1}, b_{n - 1}]\times\{a_n\} & \text{ and }\\
   R^+ = [a_1, b_1]\times ...\times [a_{n - 1}, b_{n - 1}]\times\{b_n\},
\end{matrix}
\]
and \textit{vertical} the others. We say that a rectangle $R' \subset R$ is a \textit{strict horizontal} (resp. vertical) subrectangle of $R$ if the horizontal (resp. vertical) faces of $R'$ are strictly disjoint from those of $R$ and the vertical (resp. horizontal) faces of $R'$ are included in those of $R$.
\end{definition}

Given $u \in \mathbb{R}^n$, we will denote by $\pi_n(u)$ its last coordinate.

\begin{definition}\label{markovian-intersection}
    Let $\phi$ be a homeomorphism of $U$ open set in $\mathbb{R}^n$, $R_1$ and $R_2$ two rectangles of $U$. We say that $\phi(R_1) \cap R_2$ is a \textit{Markovian intersection} if, for $R_2 = [a_1, b_1]\times ...\times [a_n, b_n]$, there exists a strict horizontal subrectangle $S \subset R_1$ such that either
    \[
    \begin{matrix}
    \phi(S^+) \subset \{u; \pi_n(u) > b_n\} \text{ and } \phi(S^-) \subset \{u; \pi_n(u) < a_n\}; & \text{ or }\\
    \phi(S^-) \subset \{u; \pi_n(u) > b_n\} \text{ and } \phi(S^+) \subset \{u; \pi_n(u) < a_n\}.
    \end{matrix}
    \]
    Also consider that $\phi(S) \subset \{u; \pi_n(u) < a_n\} \cup \text{int}(R_2) \cup \{u; \pi_n(u) > b_n\}$; see Figure \ref{fig-markovian-intersection}.
\end{definition}

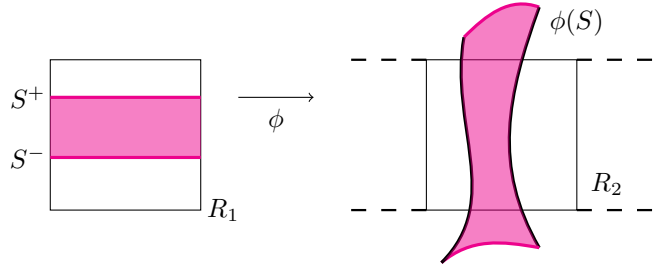
\begin{figure}[ht]
    \centering
    \begin{tikzpicture}
        \draw (0,0) -- (0,2) -- (2,2) -- (2,0) -- cycle;
        \draw[line width=1.2pt, color=magenta] (0,1.5) -- (2,1.5);
        \draw[line width=1.2pt, color=magenta] (0,0.7) -- (2,0.7);
        \fill[magenta, opacity=0.5] (0,0.7) -- (0,1.5) -- (2,1.5) -- (2,0.7);
        \node at (-0.3, 1.5) {$S^+$};
        \node at (-0.3, 0.7) {$S^-$};
        \node at (2.3,0) {$R_1$};

        \draw[->]   (2.5,1.5) -- (3.5,1.5);
        \node[scale=1] at (3, 1.2) {$\phi$};

        \draw (5,0) -- (5,2) -- (7,2) -- (7,0) -- cycle;  
        \node at (7.4,0.35) {$R_2$};
        \draw[dotted, line width=0.3mm, dash pattern=on 2mm off 2mm] (4,0) -- (5,0);
        \draw[dotted, line width=0.3mm, dash pattern=on 2mm off 2mm] (7,0) -- (8,0);
        \draw[dotted, line width=0.3mm, dash pattern=on 2mm off 2mm] (4,2) -- (5,2);
        \draw[dotted, line width=0.3mm, dash pattern=on 2mm off 2mm] (7,2) -- (8,2);

        \draw[line width=1.2pt, color=magenta] (5.5,2.3) to[in=160] (6.5,2.7);
        \draw[line width=1.2pt] (6.5,2.7) to[in=120, out=250] (6.5, -0.5);
        \draw[line width=1.2pt, color=magenta] (6.5, -0.5) to[in=40, out=170] (5.2, -0.7);
        \draw[line width=1.2pt] (5.2, -0.7) to[in=260] (5.5,2.3);
        \fill[magenta, opacity=0.5] (5.5,2.3) to[in=160] (6.5,2.7) to[in=120, out=250] (6.5, -0.5) to[in=40, out=170] (5.2, -0.7) to[in=260] (5.5,2.3);
        \node at (7,2.5) {$\phi(S)$};
    \end{tikzpicture}
    \caption{An example of a Markovian intersection.}
    \label{fig-markovian-intersection}
\end{figure}

\begin{remark}\label{remark-c0-perturbation-markov}
It is well know that the Markovian intersections have nice behaviours under $C^0$ perturbation and iteration. Precisely, if $\phi(R_1) \cap R_2$ is Markovian, then it is still true in a $C^0$ neighborhood of $\phi$; and if for an additional rectangle $R_3$, $\phi(R_2) \cap R_3$ is also Markovian, then the intersection $\phi^2(R_1) \cap R_3$ is Markovian too. To a reference, see \cite{guiheneuf_2012}, Chapter 3.
\end{remark}

Now, define the distance between closed sets $A$ and $B$ in $\mathbb{R}^n$ as $\text{dist}(A, B) = \inf \{\lVert a - b\rVert; a \in A, b \in B\}$.

\begin{definition}\label{epsilon_separated_N_horseshoe}
Let $\delta > 0$ and $\varepsilon_k = \delta/k$, for some integer $k \geq 1$. Given $n \geq 2$, take $U \subset \mathbb{R}^n$ a compact set such that $B_\delta^n \subset \text{Int}\; U$ and consider the positive integer $N_k = (2k)^n$. A homeomorphism $\phi: U \to \mathbb{R}^n$ has an \textit{$\varepsilon_k$-separated $N_k$-pseudo-horseshoe} if for $R_i$ rectangles in $B_\delta^n$, $1 \leq i \leq N_k$, such that $\text{dist}(R_i, R_j) > \varepsilon_k$, it satisfies:
\begin{enumerate}
    \item[i)] $\phi(R_i) \cap R_j \neq \varnothing$ for every $1 \leq i \leq j \leq N_k$;
    \item[ii)] $\phi(R_i) \cap R_j$ is a Markovian intersection;
\end{enumerate}
\end{definition}

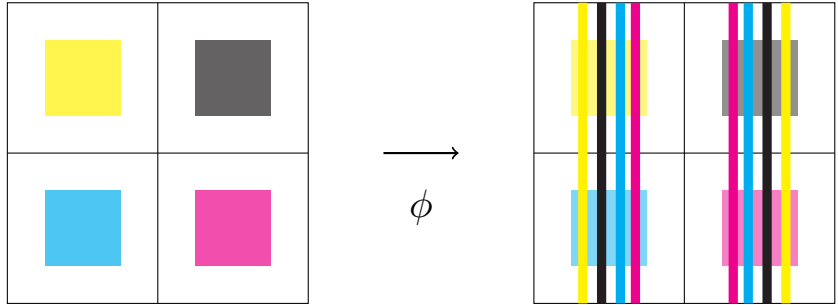
\begin{figure}[ht]
    \centering
\begin{tikzpicture}
  \definecolor{cmykcyan}{cmyk}{1,0,0,0}
  \definecolor{cmykmagenta}{cmyk}{0,1,0,0}
  \definecolor{cmykyellow}{cmyk}{0,0,1,0}
  \definecolor{cmykblack}{cmyk}{0,0,0,1}

  \draw (0,0) -- (4,0) -- (4,4) -- (0,4) -- cycle;

  \draw (2,0) -- (2,4);
  \draw (0,2) -- (4,2);

  \fill[cmykcyan, opacity=0.7] (0.5,0.5) rectangle (1.5,1.5);
  \fill[cmykmagenta, opacity=0.7] (2.5,0.5) rectangle (3.5,1.5);
  \fill[cmykyellow, opacity=0.7] (0.5,2.5) rectangle (1.5,3.5);
  \fill[cmykblack, opacity=0.7] (2.5,2.5) rectangle (3.5,3.5);

  \begin{scope}[xshift=7cm]
    \draw (0,0) -- (4,0) -- (4,4) -- (0,4) -- cycle;
    \draw (2,0) -- (2,4);
    \draw (0,2) -- (4,2);
    \fill[cmykcyan, opacity=0.5] (0.5,0.5) rectangle (1.5,1.5);
    \fill[cmykmagenta, opacity=0.5] (2.5,0.5) rectangle (3.5,1.5);
    \fill[cmykyellow, opacity=0.5] (0.5,2.5) rectangle (1.5,3.5);
    \fill[cmykblack, opacity=0.5] (2.5,2.5) rectangle (3.5,3.5);
    
    \draw[line width=3.5pt, cmykmagenta] (1.35,4) -- (1.35,0.0);
    \draw[line width=3.5pt, cmykcyan] (1.15,4) -- (1.15,0.0);
    \draw[line width=3.5pt, cmykblack] (0.9,4) -- (0.9,0.0);
    \draw[line width=3.5pt, cmykyellow] (0.65,4) -- (0.65,0.0);

    \draw[line width=3.5pt, cmykmagenta] (2.65,4) -- (2.65,0.0);
    \draw[line width=3.5pt, cmykcyan] (2.85,4) -- (2.85,0.0);
    \draw[line width=3.5pt, cmykblack] (3.1,4) -- (3.1,0.0);
    \draw[line width=3.5pt, cmykyellow] (3.35,4) -- (3.35,0.0);
  \end{scope}

  \draw[->, thick] (5,2) -- (6,2);
  
  \node[below, scale = 1.5] at (5.5,1.7) {$\phi$};
  \end{tikzpicture}
    \caption{To illustrate an $\varepsilon_1-$separated $N_1-$pseudo-horseshoe.}
      \label{fig-separated-pseudo-horseshoe}
\end{figure}

From Definition \ref{markovian-intersection}, it is easy to see that there is a strict horizontal rectangle $S_{i, j}$ of $R_i$ that contains every $R_i \cap \phi^{-1}(R_j)$. Note that since $\text{dist}(R_i, R_j) > \varepsilon_k$, then for any pair of points $x \in R_{i_1, j_1}$ and $y \in R_{i_2, j_2}$, with $i_1 \neq i_2$ or $j_1 \neq j_2$, we have that $d_2(x, y) \geq \varepsilon_k$. Hence $S(\phi, 2, \varepsilon_k) \geq (N_k)^2$, and inductively, $S(\phi, \ell, \varepsilon_k) \geq (N_k)^\ell$. This estimative is crucial to calculate the metric mean dimension of homeomorphisms.

\subsection{Pseudo-horseshoes on Manifolds}

Recall that we are considering $X$ a smooth manifold of dimension $n \geq 2$. Hence, there is a finite atlas whose charts are bi-Lipschitz. So, from now on, consider $C > 1$ an upper bound of the bi-Lipschitz constants of all charts.

The following definition, an adaptation of Definition 5.3 in \cite{carvalho_rodrigues_varandas_2020}, will extend the notion of horseshoes to a manifold and connect a collection of them along a periodic orbit.

\begin{definition}
Let $X$ be a compact smooth boundaryless manifold of dimension $n \geq 2$ endowed with a distance $d$. Given $f \in \text{Homeo}(X, \mu)$, and constants $\delta > 0$, $k \geq 1$ and $p \in \mathbb{N}$, we say that $f$ has a \textit{chained $(N_k, p)$-pseudo-horseshoe} if there is a pairwise disjoint family of open subsets $\mathcal{U} = \{U_i \subset X; 0 \leq i \leq p - 1\}$ so that
\[
f(U_i) \cap U_{[i + 1]} \neq \varnothing \text{ for all } 0 \leq i \leq p - 1,
\]
where $[i + 1] := (i + 1)\mod p$, and a collection of homeomorphisms $\Psi = \{\psi_i: U_i \subset X \to \mathbb{R}^n; 0 \leq i \leq p - 1\}$ such that $B_\delta^n \subset \psi_i(U_i)$, satisfying, for every $0 \leq i \leq p - 1$, the following itens:
\begin{itemize}
  \item[i)] $(f \circ \psi_i^{-1})\big( B_\delta^n\big) \subset U_{[i + 1]}$;
  \item[ii)] The homeomorphism
  \[
    \phi_i = \psi_{[i + 1]}\circ f\circ\psi_i^{-1}: \psi_i(U_i) \to \mathbb{R}^n
  \]
    has an $\varepsilon_k$-separated $N_k$-pseudo-horseshoe.
\end{itemize}
\end{definition}

Set $\mathcal{R}^i_j = \psi_i^{-1}(R_j)$, for $0 \leq i \leq p - 1$ and $1 \leq j \leq N_k$, and note that $f(\mathcal{R}^i_{j_1}) \cap \mathcal{R}^{[i + 1]}_{j_2}$ is a Markovian intersection on the manifold $X$, for every $1 \leq j_1 \leq j_2 \leq N_k$. Therefore, by Remark \ref{remark-c0-perturbation-markov}, there exists a neighborhood $\mathcal{U} \subset \text{Homeo}(X, \mu)$ of $f$ in the $C^0$-topology such that every $g \in \mathcal{U}$ also has a chained $(N_k, p)$-pseudo-horseshoe.

\subsection{Separating sets}

\begin{proposition}\label{separating_sets_pseudo_horseshoe}
Given $X$ a smooth compact manifold of dimension $n$ and $f \in \text{Homeo}(X, \mu)$ with a chained $(N_k, p)$-pseudo-horseshoe, then
\[
S(f, m, C\varepsilon_k) \geq \left(\frac{2\delta}{\varepsilon_k}\right)^{n\cdot m}
\]
for all $m \in \mathbb{N}$, where $C > 1$ is an upper bound of the bi-Lipschitz constants of all charts.
\end{proposition}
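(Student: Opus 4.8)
The plan is to exhibit, for every $m \in \mathbb{N}$, an $(m, C\varepsilon_k)$-separated subset of $X$ of cardinality $(2\delta/\varepsilon_k)^{nm}$. Since $\varepsilon_k = \delta/k$ and $N_k = (2k)^n$, this target equals $(2k)^{nm} = N_k^m$, so the separated set will be indexed by the words $w = (w_0, \dots, w_{m-1}) \in \{1, \dots, N_k\}^m$: one point per word, the word prescribing which rectangle of which horseshoe the $f$-orbit visits at each time step along the period-$p$ chain. The period $p$ plays no role in the count; it only guarantees that the horseshoe structure persists for all $m$ as the orbit loops around $U_0, U_1, \dots, U_{p-1}$.

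Write $\mathcal{R}^i_j = \psi_i^{-1}(R_j) \subset U_i$ for $0 \le i \le p-1$ and $1 \le j \le N_k$. For each fixed $i$ these sets are pairwise disjoint, since their chart images $R_j \subset B^n_\delta$ are $\varepsilon_k$-separated, and by item (ii) in the definition of a chained pseudo-horseshoe, for every $i$ and all $a, b$ the set $f(\mathcal{R}^i_a) \cap \mathcal{R}^{[i+1]}_b$ is Markovian (it is the Markovian intersection $\phi_i(R_a) \cap R_b$ read in the charts). Fix a word $w$. The core step is the Markov-shadowing claim
\[
\bigcap_{t=0}^{m-1} f^{-t}\!\left(\mathcal{R}^{[t]}_{w_t}\right) \neq \varnothing .
\]
Reading the $f$-orbit through the charts (legitimate by item (i), which keeps $(f\circ\psi_i^{-1})(B^n_\delta)$ inside $U_{[i+1]}$, so these orbit pieces never leave $\bigcup_i U_i$), this set equals $\psi_0^{-1}\big(R_{w_0} \cap \bigcap_{t=1}^{m-1} \Phi_t^{-1}(R_{w_t})\big)$, where $\Phi_t = \phi_{[t-1]} \circ \cdots \circ \phi_1 \circ \phi_0$. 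Since Markovian intersections compose (Remark \ref{remark-c0-perturbation-markov}, whose argument applies verbatim to compositions of distinct maps), each $\Phi_t(R_{w_0}) \cap R_{w_t}$ is Markovian; a standard induction on $t$ — adding one constraint at a time and invoking the vertical-crossing clause of Definition \ref{markovian-intersection} — then shows the nested compact sets $W_t = R_{w_0} \cap \bigcap_{s=1}^{t} \Phi_s^{-1}(R_{w_s})$ stay nonempty. Pick $x_w \in \psi_0^{-1}(W_{m-1})$.

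Distinct words give distinct and well-separated points. Let $t$ be the first coordinate at which $w \neq w'$; then $f^t(x_w) \in \mathcal{R}^{[t]}_{w_t}$ and $f^t(x_{w'}) \in \mathcal{R}^{[t]}_{w'_t}$ with $w_t \neq w'_t$, so applying $\psi_{[t]}$ their images lie in $R_{w_t}$ and $R_{w'_t}$, whose Euclidean distance exceeds $\varepsilon_k$. Passing back to $X$ through the charts, whose metric distortion is controlled by the bi-Lipschitz constant $C$, this forces $d\big(f^t(x_w), f^t(x_{w'})\big)$ to be bounded below by a fixed multiple of $\varepsilon_k$, which is what is recorded as $C\varepsilon_k$ in the statement. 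Hence $d_m(x_w, x_{w'}) \ge C\varepsilon_k$ for all $w \neq w'$, so $\{x_w : w \in \{1,\dots,N_k\}^m\}$ is an $(m, C\varepsilon_k)$-separated set with $N_k^m = (2\delta/\varepsilon_k)^{nm}$ elements, giving $S(f, m, C\varepsilon_k) \ge (2\delta/\varepsilon_k)^{nm}$.

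The step I expect to be the main obstacle is the Markov-shadowing claim, namely that the nested sets $W_t$ remain nonempty when $n \ge 2$. This is exactly where the vertical-crossing condition in Definition \ref{markovian-intersection} is used, and making it rigorous requires the topological (connectedness/degree) input that a map exhibiting a Markovian intersection carries a continuum crossing a rectangle from its lower to its upper horizontal face to another such continuum — the same property that makes Markovian intersections stable under composition and $C^0$-perturbation in Remark \ref{remark-c0-perturbation-markov}. Everything else — counting the words, the pairwise separation, and the transfer between the charts and $X$ via $C$ — is routine bookkeeping.
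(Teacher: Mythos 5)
Your proposal is correct and follows essentially the same route as the paper: the paper also indexes points by words in $\{1,\dots,N_k\}^m$, defines the nested nonempty compact sets $\mathcal{K}^{m-1}_{j_1,\dots,j_m}$ (your $\psi_0^{-1}(W_{m-1})$, i.e. $\bigcap_t f^{-t}(\mathcal{R}^{[t]}_{j_{t+1}})$) whose nonemptiness rests on the composition property of Markovian intersections from Remark \ref{remark-c0-perturbation-markov}, picks one point per word, and gets $d_m$-separation at the first differing symbol via the chart-level bound $\mathrm{dist}(R_{j},R_{\ell})>\varepsilon_k$ transferred through the bi-Lipschitz constant $C$. Even your slightly hedged treatment of the constant (whether the transfer gives $C\varepsilon_k$ or $\varepsilon_k/C$) mirrors the paper's own handling, which is harmless for the later metric mean dimension computation.
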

\begin{proof}
Since $X$ is a smooth manifold, assume that all the charts $\psi_i$ are bi-Lipschitz with Lipschitz constant upper bounded by a uniform constant $C > 1$. That is, given $x, y \in U_i$, for every $0 \leq i \leq p - 1$, $\lVert\psi_i(x) - \psi_i(y)\rVert \leq Cd(x, y)$. Therefore, for a fixed index $i$ and $1 \leq j_1 < j_2 \leq N_k$, $\text{Dist}(\mathcal{R}^i_{j_1}, \mathcal{R}^i_{j_2}) \geq C\varepsilon_k$, where $\text{Dist}(A, B) = \inf\{d(a, b); a \in A, b \in B\}$, $A$ and $B$ closed sets in $X$, and by Definition \ref{epsilon_separated_N_horseshoe}. 

Note that $f(\mathcal{R}^i_{j_1}) \cap \mathcal{R}^{[i + 1]}_{j_2} \neq \varnothing$ for every $0 \leq i \leq p - 1$ and $1 \leq j_1 \leq j_2 \leq N_k$, because each $\psi_{[i + 1]}\circ f\circ\psi_i^{-1}$ has an $\varepsilon_k$-separated $N_k$-pseudo-horseshoe. Therefore, define the following sequence of non-empty nested compact sets:

\[
\begin{matrix}
    j \in \{1, ..., N_k\} & \mapsto & \mathcal{K}^0_j = \mathcal{R}^0_j\\
    j_1, j_2 \in \{1, ..., N_k\} & \mapsto & \mathcal{K}^1_{j_1, j_2} = f^{-1}(f(\mathcal{K}^0_{j_1}) \cap \mathcal{R}^1_{j_2})\\
    \vdots & & \vdots\\
    j_1, ..., j_m \in \{1, ..., N_k\} & \mapsto & \mathcal{K}^{m - 1}_{j_1, ..., j_m} = f^{-(m - 1)}(f^{m - 1}(\mathcal{K}^{m - 2}_{j_1, ..., j_{m - 1}}) \cap \mathcal{R}^{[m - 1]}_{j_m}).
\end{matrix}
\]

Note that $\mathcal{K}^{m - 1}_{j_1, ..., j_m} \cap \mathcal{K}^{m - 1}_{\ell_1, ..., \ell_m} = \varnothing$ if $j_s \neq \ell_s$, for some $1 \leq s \leq m$. Choose $(N_k)^m$ distinct points in $X$ such that $x \in \mathcal{K}^{m - 1}_{j_1, ..., j_m}$ and $y \in \mathcal{K}^{m - 1}_{\ell_1, ..., \ell_m}$, therefore
\[
d_m(x, y) \geq \text{Dist}(\mathcal{R}^{[s - 1]}_{j_s}, \mathcal{R}^{[s - 1]}_{\ell_s}) \geq C\varepsilon_k.
\]
This proves that
\[
S(f, m, C\varepsilon_k) \geq (N_k)^m.
\]
Since $N_k = \left(\frac{2\delta}{\varepsilon}\right)^n$, the proposition is proven.
\end{proof}

\section{Perturbation lemmas}\label{section-perturbation-lemmas}

\begin{definition}\label{def-pseudo-rectangle}
    A set $P \subset \mathbb{R}^n$ is a \textit{pseudo-rectangle} if there is a homeomorphism $\psi: \mathbb{R}^n \to \mathbb{R}^n$ and a rectangle $R$ such that $\psi(R) = P$. And the homeomorphism $\kappa: \mathbb{R}^n \to \mathbb{R}^n$ is \textit{regular for P} if there is an $\varepsilon > 0$ such that for every face $\{R_i^*\}_{i = 1}^{n}$ of the rectangle $R$, there is a $c_i^*$ real number, where $* \in \{-, +\}$, such that 
    \[
    \text{dist}(\kappa\circ\psi(R_i^*), B) \leq \varepsilon, \text{ for some rectangle } B \subset \mathbb{R}^{i - 1}\times\{c_i^*\}\times\mathbb{R}^{n - i}.
    \]
    Finally, we say that $\kappa(P)$ is a \textit{regular pseudo-rectangle}.
\end{definition}

The idea is that the horizontal (resp. vertical) faces of the pseudo-rectangle $\kappa(P)$ are not far from the horizontal (resp. vertical) faces of another rectangle $R'$, just as in Figure \ref{fig-pseudo-rectangle}.

\begin{figure}[ht]
    \centering
    \begin{tikzpicture}
    \draw[line width=1.1pt, color=cyan]     (0,0) -- (0,2);
    \draw[line width=1.1pt, color=magenta]  (0,2) -- (2,2);
    \draw[line width=1.1pt, color=yellow]   (2,2) -- (2,0);
    \draw[line width=1.1pt, color=black]    (2,0) -- (0,0);

    \node[scale=1] at (2.2, 0) {$R$};

    \draw[->]   (2.5,1.5) -- (3.5,1.5);
    \node[scale=1] at (3, 1.2) {$\psi$};

    \draw[line width=1.1pt, color=black]   (4,0.5) to[in=110, out=240] (4,0);
    \draw[line width=1.1pt, color=black]   (4,0) to[in=210] (6,0);
    \draw[line width=1.1pt, color=yellow]   (6,0) to[in=290] (6,2);
    \draw[line width=1.1pt, color=yellow]   (6,2) to[in=350,out=190] (4,2);
    \draw[line width=1.1pt, color=magenta]   (4,2) to[in=80, out=280] (4,1);
    \draw[line width=1.1pt, color=magenta]   (4,1) to[in=200] (4.5,1);
    \draw[line width=1.1pt, color=magenta]   (4.5,1) to[in=190] (5.5,1.7);
    \draw[line width=1.1pt, color=cyan]   (5.5,1.7) to[in=70, out=290] (5.5,0.5);
    \draw[line width=1.1pt, color=cyan]   (5.5,0.5) to[in=50,out=120] (4,0.5);

    \node[scale=1] at (6.3, 0) {$P$};

    \draw[->] (6.7,1.5) -- (7.7,1.5);
    \node[scale=1] at (7.2, 1.2) {$\kappa$};

    \draw[line width=1.1pt, color=cyan]     (8.5,0) to[in=350, out=180] (8.5,1);
    \draw[line width=1.1pt, color=cyan]     (8.5,1) to[in=320, out=10] (8.5,2);
    \draw[line width=1.1pt, color=magenta]  (8.5,2) to[in=110, out=5] (9.5,2);
    \draw[line width=1.1pt, color=magenta] decorate [decoration={snake}]  {(9.5,2) -- (10.5,2)};
    \draw[line width=1.1pt, color=yellow]   (10.5,2) to[in=110, out=280] (10.5,1);
    \draw[line width=1.1pt, color=yellow]   (10.5,1) to[in=30, out=280] (10.5,0);
    \draw[line width=1.1pt, color=black]    (10.5,0) to[in=70,out=120] (9.5,0);
    \draw[line width=1.1pt, color=black] decorate [decoration={snake}]   {(9.5,0) -- (8.5,0)};

    \node[scale=1] at (11,0) {$\kappa(P)$};
    \end{tikzpicture}
    \caption{An example of a homeomorphism $\kappa$ that is regular for the pseudo-rectangle $P$.}
    \label{fig-pseudo-rectangle}
\end{figure}
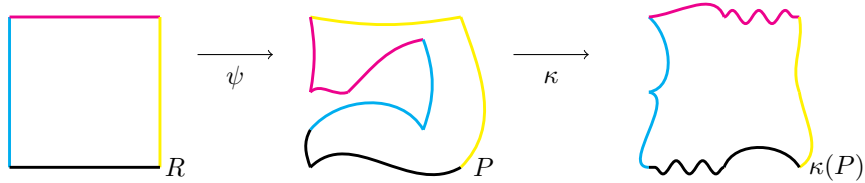

\begin{definition}
A Lebesgue preserving homeomorphism $\Gamma: \mathbb{R}^n \to \mathbb{R}^n$ is a \textit{local conservative rigid-linear modification} if there is $U$ an open set such that:
\begin{enumerate}
    \item[i)] $\Gamma|_U$ is a linear transformation $T$, where $\det T = 1$, or;
    \item[ii)] $\Gamma|_U$ is a translation;
\end{enumerate}
and there is $V$ a neighborhood of $U \cup \Gamma(U)$ such that $\Gamma|_{V^\complement}$ is the identity.
\end{definition}

The existence of such homeomorphism is guaranteed by Lemma \ref{local-modification-lemma}, when both $\partial U$ and $\partial V$ are bicollared embeddings of the sphere. We are also interested in finite composition of such homeomorphisms, as represented in Figure \ref{fig-local-consivative-rigid-linear-modf}.

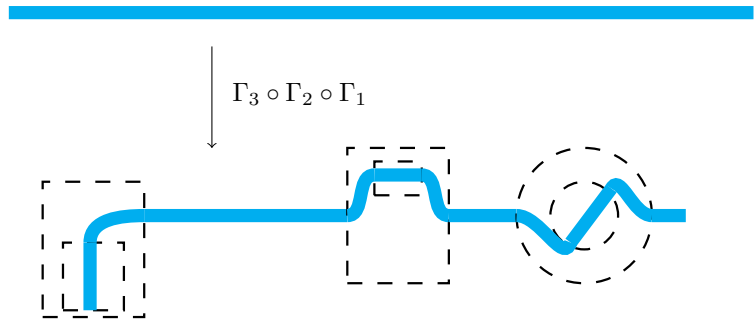
\begin{figure}[ht]
    \centering
    \begin{tikzpicture}[scale=0.9]
        \draw[line width=5pt, color=cyan] (2,0) -- (13,0);

        \draw[->]   (5,-0.5) -- (5,-2);
        \node at (6.3,-1.2) {$\Gamma_3\circ\Gamma_2\circ\Gamma_1$};

        \draw[dotted, line width=0.3mm, dash pattern=on 2mm off 2mm] (2.5,-2.5) -- (2.5,-4.5) -- (4,-4.5) -- (4, -2.5) -- cycle;
        \draw[dotted, line width=0.3mm, dash pattern=on 2mm off 2mm] (2.8,-3.4) -- (2.8,-4.4) -- (3.7,-4.4) -- (3.7, -3.4) -- cycle;
        \draw[line width=5pt, color=cyan] (3.2,-3.4) -- (3.2, -4.4);
        \draw[line width=5pt, color=cyan] (3.2,-3.4) to[in=180, out=90] (4,-3);
        \draw[line width=5pt, color=cyan] (4,-3) -- (7,-3);
        \draw[dotted, line width=0.3mm, dash pattern=on 2mm off 2mm] (7,-2) -- (7,-4) -- (8.5,-4) -- (8.5, -2) -- cycle;
        \draw[dotted, line width=0.3mm, dash pattern=on 2mm off 2mm] (7.4,-2.2) -- (7.4,-2.7) -- (8.1,-2.7) -- (8.1, -2.2) -- cycle;
        \draw[line width=5pt, color=cyan] (7.4,-2.4) -- (8.1,-2.4);
        \draw[line width=5pt, color=cyan] (7,-3) to[in=180, out=0] (7.4,-2.4);
        \draw[line width=5pt, color=cyan] (8.1,-2.4) to[in=180, out=0] (8.5,-3);
        \draw[line width=5pt, color=cyan] (8.5,-3) -- (9.5,-3);
        \draw[dotted, line width=0.3mm, dash pattern=on 2mm off 2mm] (10.5,-3) circle (1);
        \draw[dotted, line width=0.3mm, dash pattern=on 2mm off 2mm] (10.5,-3) circle (0.5);
        \draw[line width=5pt, color=cyan] (11.5,-3) -- (12, -3);
        \draw[line width=5pt, color=cyan] (10.3,-3.4) -- (10.9,-2.6);
        \draw[line width=5pt, color=cyan] (9.5,-3) to[in=250, out=0] (10.3,-3.4);
        \draw[line width=5pt, color=cyan] (10.9,-2.6) to[in=180, out=60] (11.5,-3);
    \end{tikzpicture}
    \caption{An example of a composition of local conservative rigid-linear modifications.}
    \label{fig-local-consivative-rigid-linear-modf}
\end{figure}

\begin{example}
We are interested in Lebesgue measure preserving linear transformations such as rotations, shear and hyperbolic matrices:

\begin{minipage}{0.3\textwidth}
    \centering
    \[R =
    \begin{bmatrix}
    1 & 0 & 0 \\
    0 & 0 & -1 \\
    0 & 1 & 0 \\
    \end{bmatrix},
    \]
  \end{minipage}%
  \begin{minipage}{0.3\textwidth}
    \centering
    \[S =
    \begin{bmatrix}
    1 & 0 & 0 \\
    0 & 1 & 0 \\
    \alpha & \alpha & 1 \\
    \end{bmatrix},
    \]
  \end{minipage}%
  \begin{minipage}{0.3\textwidth}
    \centering
    \[H = 
    \begin{bmatrix}
    3 & 0 & 0 \\
    0 & 2 & 0 \\
    0 & 0 & \frac{1}{6} \\
    \end{bmatrix}.
    \]
  \end{minipage}
\end{example}

\begin{lemma}\label{regular-lebesgue-conservative-homeo}
    For a finite family $\{P_i\}_{i = 1}^M$ of disjoint pseudo-rectangles, there is a $\kappa: \mathbb{R}^n \to \mathbb{R}^n$ Lebesgue conservative homeomorphism that is regular for each $P_i$.
\end{lemma}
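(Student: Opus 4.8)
The plan is to handle one pseudo-rectangle at a time and then compose, using disjointness to ensure that successive modifications do not interfere with one another. Fix $P_i$ with its defining data: a homeomorphism $\psi_i \colon \mathbb{R}^n \to \mathbb{R}^n$ and a rectangle $R^{(i)} = [a_1^{(i)}, b_1^{(i)}] \times \cdots \times [a_n^{(i)}, b_n^{(i)}]$ with $\psi_i(R^{(i)}) = P_i$. I would first reduce to the case $M = 1$: since the $P_i$ are pairwise disjoint compact sets, I can choose pairwise disjoint open neighborhoods $W_i \supset P_i$, construct each $\kappa_i$ supported in $W_i$, and set $\kappa = \kappa_M \circ \cdots \circ \kappa_1$; the support condition guarantees $\kappa$ is simultaneously regular for every $P_i$, and a composition of Lebesgue conservative homeomorphisms is Lebesgue conservative. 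So it remains to build, for a single pseudo-rectangle $P = \psi(R)$, a Lebesgue conservative $\kappa$ that is regular for $P$ and is supported in a prescribed neighborhood of $P$.

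For the single-pseudo-rectangle case, the idea is to straighten the $2n$ faces of $P$ one at a time. Each face $\psi(R_j^*)$ (for $j = 1, \dots, n$ and $* \in \{-,+\}$) is a compact topological $(n-1)$-cell sitting in $\mathbb{R}^n$; I want to nudge it so that it lies within Hausdorff distance $\varepsilon$ of some rectangle $B \subset \mathbb{R}^{j-1} \times \{c_j^*\} \times \mathbb{R}^{n-j}$. To do this I would invoke Lemma \ref{extension-finite-maps-lemma} (Extension of finite maps): pick a sufficiently fine finite net of points on the face $\psi(R_j^*)$, together with target points lying on a genuine coordinate hyperplane rectangle that approximate them to within $\varepsilon/2$, and extend this finite assignment to a conservative homeomorphism whose support is confined to a thin tube around the face. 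Choosing the net fine enough and using uniform continuity of the extension controls the Hausdorff distance between the image of the whole face and the rectangle $B$. Doing this for all $2n$ faces in succession — routing the tubes so that the tube for one face is disjoint from the (already straightened) other faces, which is possible because distinct faces of $R$, hence of $P$, are disjoint except along lower-dimensional boundary strata, and the definition only asks for $\varepsilon$-closeness so we have room to work — produces the desired $\kappa$ as a composition. Each building block is conservative by Lemma \ref{extension-finite-maps-lemma}, so $\kappa$ is conservative.

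The main obstacle I anticipate is the bookkeeping that keeps the successive face-straightening modifications from undoing each other: once I have pushed $\psi(R_n^+)$ near a horizontal hyperplane, the later modification that straightens a vertical face must not move points of the already-straightened horizontal face by more than $\varepsilon$. This is handled by the support control in Lemma \ref{extension-finite-maps-lemma} (the support can be taken in an arbitrarily small neighborhood of the chosen connecting paths) together with the fact that, along the corner strata where faces meet, one can either order the straightenings compatibly or simply absorb the small corner discrepancies into the allowed error $\varepsilon$ — the definition of "regular" only demands $\text{dist}(\kappa \circ \psi(R_j^*), B) \le \varepsilon$, not exact flattening, which gives exactly the slack needed. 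A secondary point worth a sentence is that we need each target rectangle $B$ to be a genuine rectangle of the right dimension inside the coordinate hyperplane, which is automatic since the image of the net can be chosen inside a large enough coordinate rectangle. With these observations the proof is a careful but routine iteration of the two cited lemmas.
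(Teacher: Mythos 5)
There is a genuine gap at the heart of your single-pseudo-rectangle step. Lemma \ref{extension-finite-maps-lemma} only prescribes the images of the finitely many net points; it gives no control whatsoever over where the rest of the face $\psi(R_j^*)$ goes. Your claim that ``choosing the net fine enough and using uniform continuity of the extension controls the Hausdorff distance'' does not work: the extension $\varphi$ is produced \emph{after} the net is chosen, its modulus of continuity is not something you can prescribe, and the displacement bound $D(\varphi,\mathrm{id})<\delta$ is useless here because $\delta$ must be at least as large as the (possibly large) distance from the face to the target hyperplane. The failure is clearest with your own support requirement: if $\varphi$ is supported in a thin tube around the connecting paths, every point of the face outside that tube is simply fixed, hence stays wherever $\psi$ put it, possibly far from any rectangle in a coordinate hyperplane; if instead you enlarge the support to engulf the whole face, the lemma says nothing about where the non-net points of the face are sent. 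Either way, regularity of $\kappa\circ\psi(R_j^*)$ is not established, and no amount of refining the net fixes this, since the problem is a continuum versus finitely many points.

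This is precisely why the paper's proof does not rely on Lemma \ref{extension-finite-maps-lemma} alone. There, the finite-map step serves only to anchor a grid of points of the pseudo-rectangle onto the grid of a genuine cube $[c,d]^n$ of the same volume (and to space the images of the several $P_i$ apart); the actual flattening of each face is done by composing \emph{local conservative rigid-linear modifications} --- translations, shears, and hyperbolic volume-preserving linear maps, glued into globally defined conservative homeomorphisms via the local modification Lemma \ref{local-modification-lemma} --- which act on an entire face at once, first pushing the whole face to the correct side of the hyperplane $\{x_n=d\}$ and then squeezing it into a slab $[c,d]^{n-1}\times[-\varepsilon,\varepsilon]$ (shifted appropriately), while fixing the already-anchored vertices. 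Your proposal omits this whole-face mechanism, which is the essential ingredient; the reduction to $M=1$ by disjoint supports and composition is fine, but by itself it does not salvage the face-straightening step. To repair the argument you would need to reintroduce some device that moves whole faces with quantitative control (as the rigid-linear modifications do), not merely finitely many points on them.
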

\begin{proof}
    By Definition \ref{def-pseudo-rectangle}, there is a homeomorphism $\psi_1$ and a rectangle $R_1$ such that $\psi_1(R_1) = P_1$. Suppose, without loss of generality, that $R_1 = [a, b]^n$, for some $a < b$ real numbers. For $k \in \mathbb{N}$, consider the partition $\mathcal{P}_{a, b} = \{t_j \in \mathbb{R}; t_j = a + j(b - a)/k\}$ of the interval $[a, b]$, where $0 \leq j \leq k$. We can also suppose that $k$ is large enough such that the partition is $\varepsilon$-dense, for some $\varepsilon > 0$.

    Now, apply Lemma \ref{extension-finite-maps-lemma} in $\mathbb{R}^n$ to obtain a Lebesgue conservative homeomorphism $\Phi_1: \mathbb{R}^n \to \mathbb{R}^n$ such that $\Phi_1\circ\psi_1(t_{j_1}, ..., t_{j_n}) = (s_{j_1}, ..., s_{j_n})$, where $s_j$ belongs to the partition $\mathcal{P}_{c, d}$ of the interval $[c, d]$ and $\lambda([c, d]^n) = \lambda(P_1)$. Note that, in particular, $\Phi_1\circ\psi_1(a, a, ..., a) = (c, c, ..., c)$, and $\Phi_1(R_1)$ still a pseudo-rectangle. 

    Use the above step for all the pseudo-rectangles in $\{P_i\}_{i = 1}^M$ to construct a $\Phi: \mathbb{R}^n \to \mathbb{R}^n$ such that the images of all pseudo-rectangles under $\Phi$ are sufficiently spaced apart.

    For the next step, set $R_1^+ = [a, b]^{n - 1}\times\{b\}$ and consider $F := \Phi\circ\psi_1(R_1^+)$. Also consider a finite family of local conservative rigid-linear modifications $\{\Gamma_\ell\}_{\ell = 1}^N$ such that $\Gamma_N\circ...\circ\Gamma_1(F) \subset [c, d]^{n - 1}\times (d, +\infty)$. Moreover, we can choose $\Gamma := \Gamma_N\circ...\circ\Gamma_1$ so that it fix the vertex of the pseudo-rectangle $\Phi(R_1)$.

    This is possible because once $\Gamma_{K_1}\circ...\circ\Gamma_1(F) \subset ([c, d]^{n - 1}\times (-\infty,d])^\complement$, for some $K_1 < N$, we can shear it to get $\Gamma_{K_2}\circ...\circ\Gamma_1(F) \subset \{u \in \mathbb{R}^n; \pi_n(u) \geq d\}$, where $\pi_n$ is the projection on the last coordinate and $K_1 < K_2 < N$. Then, apply locally a conservative hyperbolic matrix of the form $T(u_1, u_2, ..., u_n) = (u_1/\theta, u_2/\theta, ..., \theta^{n-1}u_n)$, where $\theta > 1$ is large enough.

    We argue that $\lambda([c, d]^n) = \lambda(P_1)$ so that the new pseudo-rectangle is not squished into a smaller rectangle and to minimize the number of local modifications.

    Again, repeat the above step for the remain horizontal face and for all the horizontal and vertical faces of all the pseudo-rectangles under $\Phi$. We still call it $\Gamma$.

    To conclude, $\Gamma\circ\Phi$ represents our $\kappa$ because it maps the image of a face of a rectangle, which is a set of the form $[a, b]^{\ell - i}\times{\alpha_\ell}\times[a, b]^{n - \ell}$, where $\alpha_\ell \in \{a, b\}$, to be included in $[c, d]^{\ell - i}\times [-\varepsilon, \varepsilon]\times[c, d]^{n - \ell}$, where $\varepsilon > 0$ can be chosen uniformly for all pseudo-rectangle $P_i$.
\end{proof}

\begin{definition}
    Given two pseudo-rectangles $P_1, P_2$ and $\phi$ an homeomorphism defined in a neighborhood of $P_1$, we say that $\phi(P_1) \cap P_2$ is a \textit{Markovian intersection for $P_1$ and $P_2$} if $\phi\circ\psi_1^{-1}(P_1) \cap \psi_2^{-1}(P_2)$ is a Markovian intersection of rectangles.
\end{definition}

\begin{lemma}\label{existence-chained-pseudo-horseshoe}
Let $F \in \text{Homeo}(X, \mu)$ be such that it has a periodic point of period $p$, then for $\eta > 0$ and every integer $k \geq 1$, there is $f \in \text{Homeo}(X, \mu)$ such that it has a chained $(N_k, p)-$pseudo-horseshoe and $D(F, f) < \eta$.
\end{lemma}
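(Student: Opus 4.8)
The plan is to construct $f$ by a finite sequence of arbitrarily small conservative perturbations of $F$, each supported in a small ball around a point of the periodic orbit, so that at the end $f$ displays the required chained pseudo-horseshoe. Let $\{x_0, x_1, \dots, x_{p-1}\}$ be the periodic orbit of $F$, with $F(x_i) = x_{[i+1]}$. Choose pairwise disjoint open sets $U_i \ni x_i$ small enough that $F(U_i) \cap U_{[i+1]} \neq \varnothing$ (by continuity, shrinking if necessary, and using $F(x_i) = x_{[i+1]}$), and fix bi-Lipschitz charts $\psi_i : U_i \to \mathbb{R}^n$ with $B^n_\delta \subset \psi_i(U_i)$, where $\delta>0$ is taken small enough that the sets and the perturbations we build stay inside the $U_i$'s and keep $D(F,\cdot)$ below $\eta$. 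After the perturbation the local map $\phi_i = \psi_{[i+1]} \circ f \circ \psi_i^{-1}$ must carry $B^n_\delta$ into $U_{[i+1]}$ and must exhibit an $\varepsilon_k$-separated $N_k$-pseudo-horseshoe on the $N_k = (2k)^n$ rectangles $R_1, \dots, R_{N_k} \subset B^n_\delta$ obtained by partitioning the cube into a $2k \times \cdots \times 2k$ grid of subcubes of side $\varepsilon_k = \delta/k$ (so that $\mathrm{dist}(R_i,R_j) > \varepsilon_k$ once we shrink each subcube slightly, separating them with thin corridors).

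The core of the argument is to realize, for one index at a time, the Smale-horseshoe-type behaviour by a conservative $C^0$-small perturbation. First I would use Lemma~\ref{extension-finite-maps-lemma} (extension of finite maps) together with Poincaré recurrence — exactly as in the density-of-periodic-points discussion in Section~\ref{section-conservative-toolbox} — only insofar as it is needed to justify moving finitely many marked points; but the real work is done by the perturbation machinery of Section~\ref{section-perturbation-lemmas}. The set $F(\mathcal R^i_j) = F(\psi_i^{-1}(R_j))$ is a pseudo-rectangle in the chart $\psi_{[i+1]}$; by Lemma~\ref{regular-lebesgue-conservative-homeo} there is a Lebesgue-conservative homeomorphism $\kappa$ (pulled back to a conservative homeomorphism supported near $x_{[i+1]}$ via the chart and the Homeomorphic Measures Theorem~\ref{homeomorphic-measure}) that is regular for the finite family $\{F(\mathcal R^i_j)\}_j$, so that after composing with $\kappa$ each image face lies within $\varepsilon$ of the corresponding face of a genuine rectangle $R'_j$. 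Then I would apply a further composition of local conservative rigid-linear modifications — hyperbolic matrices of the type $T(u) = (u_1/\theta,\dots,u_{n-1}/\theta,\theta^{n-1}u_n)$ with $\theta > 1$ large, as in the example and in the proof of Lemma~\ref{regular-lebesgue-conservative-homeo} — stretching each $R'_j$ in the vertical direction until its top face is pushed above $\{\pi_n(u) > b_n\}$ and its bottom face below $\{\pi_n(u) < a_n\}$ for every target rectangle $R_\ell$ with $\ell \geq j$, while the horizontal contraction keeps the image thin enough to pass through each $R_\ell$ as a strict horizontal subrectangle; this is precisely the content of Definition~\ref{markovian-intersection}, so $\phi_i(R_j) \cap R_\ell$ becomes Markovian for all $j \leq \ell$. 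Conservativity of $T$ ($\det T = 1$) and of $\kappa$, together with the volume-matching hypotheses of Lemma~\ref{local-modification-lemma}, guarantee the resulting $f$ is in $\text{Homeo}(X,\mu)$; choosing $\delta$ small and the supports of all modifications inside the $U_i$ keeps $D(F,f) < \eta$.

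Concretely I would carry out the steps in this order: (1) fix the periodic orbit, the disjoint neighbourhoods $U_i$, the charts $\psi_i$, and the grid of rectangles $\{R_j\}$ in $B^n_\delta$ with $\mathrm{dist}(R_j,R_\ell) > \varepsilon_k$; (2) perturb $F$ near each $x_i$, using Lemma~\ref{extension-finite-maps-lemma} and Theorem~\ref{homeomorphic-measure}, to arrange item~(i) of the chained-pseudo-horseshoe definition, namely $(f\circ\psi_i^{-1})(B^n_\delta) \subset U_{[i+1]}$; (3) for each $i$ in turn, apply Lemma~\ref{regular-lebesgue-conservative-homeo} to make the plates $\{f(\mathcal R^i_j)\}_j$ regular pseudo-rectangles in the chart $\psi_{[i+1]}$; (4) compose with local conservative rigid-linear modifications (shears to straighten, then a hyperbolic matrix to stretch vertically and contract horizontally) so that each $\phi_i(R_j)$ crosses every $R_\ell$, $\ell\geq j$, Markovianly; (5) check all perturbations are $C^0$-small and conservative, hence $f \in \text{Homeo}(X,\mu)$ with $D(F,f)<\eta$ and with a chained $(N_k,p)$-pseudo-horseshoe. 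The main obstacle I anticipate is in step~(4): one must simultaneously achieve $N_k$ nested Markovian crossings with a single conservative local homeomorphism while keeping everything inside $B^n_\delta$ and respecting the exact volume constraint $\lambda(A_1) = \lambda(B_1)$, $\lambda(\Sigma) = \lambda(\Lambda)$ demanded by Lemma~\ref{local-modification-lemma} — the hyperbolic stretch wants to expand volume in the vertical direction, so the bookkeeping that its horizontal contraction exactly compensates (and that the "spacer'' regions between rectangles also have matching volumes on both sides) is the delicate point, and is exactly why the determinant-one normalization and the freedom in Lemma~\ref{regular-lebesgue-conservative-homeo} to prescribe $\lambda([c,d]^n) = \lambda(P_1)$ were built in.
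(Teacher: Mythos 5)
Your overall strategy coincides with the paper's: work in bi-Lipschitz charts along the periodic orbit, transfer $\mu$ to Lebesgue measure via Theorem \ref{homeomorphic-measure}, regularize pseudo-rectangles with Lemma \ref{regular-lebesgue-conservative-homeo}, create the crossings with a determinant-one hyperbolic map plus local conservative rigid-linear modifications, glue with Lemma \ref{local-modification-lemma}, and make everything $C^0$-small by shrinking the $U_i$. The genuine gap is in how you treat the \emph{target} rectangles. All of your model perturbations (the regularizing map, the shears, the hyperbolic $T$) preserve Lebesgue measure, so to make them $\mu$-conservative you must conjugate by the homeomorphism $\xi_{[i+1]}$ furnished by Theorem \ref{homeomorphic-measure}, which identifies Lebesgue measure with the normalized push-forward $(\psi_{[i+1]})_*\mu$. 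But in those Lebesgue coordinates the target rectangles $R_\ell \subset B^n_\delta$ are no longer rectangles: they become distorted pseudo-rectangles (the paper's $P^{[i+1]}_\ell = \xi_{[i+1]}(R^{[i+1]}_\ell)$). Your step (3) regularizes only the image plates $\{f(\mathcal{R}^i_j)\}_j$, and your step (4) then asks the stretched plates to cross ``every $R_\ell$'' Markovianly --- yet the hyperbolic stretch yields Markovian crossings only with respect to sets whose horizontal and vertical faces are essentially flat, which the distorted targets need not be. The paper resolves this by applying Lemma \ref{regular-lebesgue-conservative-homeo} a second time to obtain a Lebesgue-conservative $\kappa_{[i+1]}$ regular for the targets, arranging the Markovian intersections between the two regularized families, and, crucially, inserting $\kappa_{[i+1]}^{-1}$ (and $\xi_{[i+1]}^{-1}$) into the definition of $f$ so that, back in the chart $\psi_{[i+1]}$, the crossings are with the genuine rectangles $R^{[i+1]}_\ell$. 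Without this second regularization and the compensating $\kappa_{[i+1]}^{-1}$, your construction either loses $\mu$-conservativity (if the perturbation is built directly in the chart coordinates) or loses the Markovian property with respect to the prescribed rectangles (if it is built in the Lebesgue coordinates).

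A smaller remark: the volume bookkeeping you single out as the delicate point of step (4) is handled in the paper simply via the remark following Lemma \ref{local-modification-lemma}: take $h_2$ to be the identity and $h_1 = \Gamma\circ T$ restricted to a neighborhood $V$ of the regularized plates with bicollared boundary, keeping $V\cup\Gamma\circ T(V)$ inside the chart image of $F(U_i)\cap U_{[i+1]}$; then $\lambda(A_1)=\lambda(B_1)$ holds automatically because $\Gamma\circ T$ preserves Lebesgue measure, so no extra compensation between the vertical expansion and the horizontal contraction is required. Note also that the support of the gluing need not stay inside $B^n_\delta$, only inside that chart image of $F(U_i)\cap U_{[i+1]}$.
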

\begin{proof}

Consider $x \in X$ a periodic point for $F$ of period $p$ and $U_i$ a neighborhood of $F^i(x)$, for $0 \leq i \leq p - 1$. Recall that the existence of a periodic point is a consequence of Lemma \ref{extension-finite-maps-lemma} and the Poincaré recurrence theorem.

Choose a bi-Lipschitz chart $\psi_i: U_i \to \mathbb{R}^n$ such that $\psi_i(F^i(x)) = 0$. Suppose that $\psi_i(U_i) = I^n$ and endow it with the measure $\widetilde{\mu}_i = D_i\cdot (\psi_i)_*\mu$, that is, for $A \subset \psi_i(U_i)$ a Borel set, $\widetilde{\mu}_i(A) = D_i\cdot\mu(\psi_i^{-1}(A))$, where $D_i = (\mu(U_i))^{-1}$. Observe that $\widetilde{\mu}_i$ still a good Borel probability measure.

Fix some $i \in \{0, 1, ..., p - 1\}$, choose an integer $k \geq 2$ and consider $B_\delta^n \subset \psi_i(U_i)$, for some $0 < \delta < 1/2$. One can choose sufficiently small $\delta > 0$ such that $F\circ\psi_i^{-1}(B_\delta^n) \subset U_{[i + 1]}$, where $[i + 1] := i + 1 \mod p$. Remember that we defined $\varepsilon_k = \delta/k$ e $N_k = (2k)^n$.

Let $R_j^i$ be rectangles in $B_\delta^n \subset \psi_i(U_i)$ such that $\text{dist}(R_j, R_\ell) > \varepsilon_k$, where $1 \leq j < \ell \leq N_k$. Also consider rectangles $R_j^{[i + 1]}$ with the same property in $B_\delta^n \subset \psi_{[i + 1]}(U_{[i + 1]})$

By the light of Theorem \ref{homeomorphic-measure}, there is a homeomorphism 
\[
\xi_{[i + 1]}: \psi_{[i + 1]}(U_{[i + 1]}) \to \psi_{[i + 1]}(U_{[i + 1]})
\]
such that $\widetilde{\mu}_{[i + 1]} = (\xi_{[i + 1]})_*\lambda$, where $\lambda$ is the Lebesgue measure on $\psi_{[i + 1]}(U_{[i + 1]}) = I^n$, and $\xi_{[i + 1]}$ is the identity on $\partial I^n$.

By the construction in Lemma \ref{regular-lebesgue-conservative-homeo}, there is $\tau_i, \kappa_{[i + 1]}: I^n \to I^n$ Lebesgue conservative homeomorphisms such that $\tau_i$ is regular for each $P^i_j := \xi_{[i + 1]}\circ\psi_{[i + 1]}\circ F\circ\psi_i^{-1}(R_j^i)$, and $\kappa_{[i + 1]}$ is regular for each $P_j^{[i + 1]} := \xi_{[i + 1]}(R_j^{[i + 1]})$.

Now, consider $T: \mathbb{R}^n \to \mathbb{R}^n$ a linear transformation such that 
\[
T(u_1, u_2, ..., u_n) = \left(\theta^{n - 1}u_1, \frac{u_2}{\theta}, ..., \frac{u_n}{\theta}\right)
\]
for some $\theta > 1$. Note that $T$ preserves the Lebesgue measure because $\det(T) = 1$.

Also consider a finite family of local conservative rigid-linear modifications $\Gamma_1, ..., \Gamma_t$ such that $\Gamma = \Gamma_1\circ ...\circ \Gamma_t$ has the following properties:
\begin{enumerate}
    \item[I)] $\Gamma\circ T\circ\tau_i(P^i_j) \cap \kappa_{[i + 1]}(P_\ell^{[i + 1]}) \neq \varnothing$ for all $1 \leq j \leq \ell \leq N_k$;
    \item[II)] $\Gamma\circ T\circ\tau_i(P^i_j) \cap \kappa_{[i + 1]}(P_\ell^{[i + 1]})$ is a Markovian intersection for $P^i_j$ and $P_\ell^{[i + 1]}$.
\end{enumerate}

Apply Lemma \ref{local-modification-lemma} to obtain a Lebesgue conservative homeomorphism $\mathcal{L}: \mathbb{R}^n \to \mathbb{R}^n$ such that:

\begin{enumerate}
    \item[a)] In a neighborhood $V$ of $\left(\bigcup\limits_{j = 1}^{N_k} \tau_i(P^i_j)\right)$, where 
    \[
    V \subset \tau_i\circ\xi_{[i+1]}\circ\psi_{[i+1]}(F(U_i) \cap U_{[i + 1]})
    \]
    and $\partial V$ is a bicollar embedding of $\mathbb{S}^{n - 1}$, we have that $\mathcal{L}|_V = \Gamma\circ T|_V$;
    \item[b)] Outside $W$, open set such that 
    \[
    (V \cup \Gamma\circ T(V)) \subset W \subset \tau_i\circ\xi_{[i+1]}\circ\psi_{[i+1]}(F(U_i) \cap U_{[i + 1]})
    \]
    and $\partial W$ is also a bicollar embedding of $\mathbb{S}^{n - 1}$, $\mathcal{L}$ is the identity.
\end{enumerate}

See Figure \ref{fig-chained-pseudo-horseshoe} to have a illustrated overview of this proof. Therefore, $\xi_{[i+1]}^{-1}\circ\kappa_{[i+1]}^{-1}\circ\mathcal{L}\circ\tau_i\circ\xi_{[i + 1]}\circ\psi_{[i+1]}\circ F\circ\psi_i^{-1}: I^n \to I^n$ has an $\varepsilon_k$-separated $N_k$-horseshoe and $f: X \to X$ defined as 
\[
f(x) =
\begin{cases}
    \psi_{[i+1]}^{-1}\circ\xi_{[i+1]}^{-1}\circ\kappa_{[i+1]}^{-1}\circ\mathcal{L}\circ\tau_i\circ\xi_{[i + 1]}\circ\psi_{[i+1]}\circ F(x), & \text{if}\;\; x \in U_i;\\
    F(x), & \text{otherwise};
\end{cases}
\]
for all $0 \leq i \leq p - 1$, is such that $f \in \text{Homeo}(X, \mu)$ and it also has a chained $(N_k, p)$-pseudo-horseshoe.

Finally, in order to obtain $D(F, f) < \eta$ one can shrink each $U_i$ so that 
\[
\max\limits_{0 \leq i \leq p - 1} \text{diam}(U_i) < \eta,
\] 

where $\text{diam}(A) = \sup_{x, y \in A} d(x, y)$ and $d$ is the Riemannian metric.
\end{proof}

\begin{figure}[ht]
    \centering
    \begin{tikzpicture}[scale=0.8]
        \draw[line width=.7pt] (0,0) -- (0,3) -- (3,3) -- (3,0) -- cycle;
        \draw[line width=.7pt, purple] (0.5,0.5) -- (0.5,2.5) -- (2.5,2.5) -- (2.5,0.5) -- cycle;
        \fill[magenta, opacity=0.7] (0.7,0.7) -- (0.7,1.2) -- (1.2,1.2) -- (1.2,0.7) -- cycle;
        \fill[magenta, opacity=0.7] (2.3,0.7) -- (2.3,1.2) -- (1.8,1.2) -- (1.8,0.7) -- cycle;
        \fill[magenta, opacity=0.7] (0.7,2.3) -- (0.7,1.8) -- (1.2,1.8) -- (1.2,2.3) -- cycle;
        \fill[magenta, opacity=0.7] (1.8,1.8) -- (1.8,2.3) -- (2.3,2.3) -- (2.3,1.8) -- cycle;

        \draw[->, thick] (3.8,2) -- (5.2,2);
        \node[scale=0.8] at (4.5,1.6) {$\psi_{[i+1]}\circ F\circ \psi_i^{-1}$};

        \node at (3.4,-0.3) {$\psi_i(U_i)$};

        \begin{scope}[xshift=6cm]
        \draw[line width=.7pt] (0,0) -- (0,3) -- (3,3) -- (3,0) -- cycle;
        \fill[lightgray, opacity=0.2] (0,0) -- (0,3) -- (3,3) to[in=0,out=270] cycle;
        \draw[line width=.7pt, purple] (0.3,0.3) to[in=260,out=90] (0.4,2.7) to[in=180, out=340] (2.5,2.9) to[in=80, out=280] (2.2,1.1) to[in=10,out=190] cycle;
        \fill[magenta, opacity=0.7] (0.4,0.4) to[in=270] (0.6,1.2) to[in=150] (1.2,1.2) to[in=70,out=280] (1,0.7) -- cycle;
        \fill[magenta, opacity=0.7] (2.1,1.2) to[in=110] (2.1,1.4) to[in=70,out=120] (1.4,1.5) to[in=70, out=280] (1.4,1) to[in=170] cycle;
        \fill[magenta, opacity=0.7] (0.6,2.2) to[in=110,out=260] (0.6,1.7) to[in=180] (1.1,2) to[in=240] (1.1,2.5) to[in=80] cycle;
        \fill[magenta, opacity=0.7] (1.8,2) to[in=280] (1.8,2.6) to[in=160] (2.3,2.6) to[in=110,out=270] (2.3,2.3) to[in=0] cycle;
        \end{scope}

        \begin{scope}[xshift=10.5cm]
        \draw[line width=.7pt] (0,0) -- (0,3) -- (3,3) -- (3,0) -- cycle;
        \draw[line width=.7pt, blue] (0.5,0.5) -- (0.5,2.5) -- (2.5,2.5) -- (2.5,0.5) -- cycle;
        \fill[cyan, opacity=0.7] (0.7,0.7) -- (0.7,1.2) -- (1.2,1.2) -- (1.2,0.7) -- cycle;
        \fill[cyan, opacity=0.7] (2.3,0.7) -- (2.3,1.2) -- (1.8,1.2) -- (1.8,0.7) -- cycle;
        \fill[cyan, opacity=0.7] (0.7,2.3) -- (0.7,1.8) -- (1.2,1.8) -- (1.2,2.3) -- cycle;
        \fill[cyan, opacity=0.7] (1.8,1.8) -- (1.8,2.3) -- (2.3,2.3) -- (2.3,1.8) -- cycle;

        \node at (3.4,-0.3) {$\psi_{[i+1]}(U_{[i+1]})$};
        \end{scope}

        \draw[->] (7.5,-0.5) -- (7.5,-1);
        \node at (8.1, -0.8) {$\xi_{[i + 1]}$};

        \begin{scope}[shift={(6,-4.5)}]
        \draw[line width=.7pt] (0,0) -- (0,3) -- (3,3) -- (3,0) -- cycle;
        \fill[lightgray, opacity=0.2] (0,0) -- (0,3) -- (3,3) to[in=90,out=260] (2.5,0.5) to[in=30,out=180] cycle;
        \fill[magenta, opacity=0.7] (0.2,0.2) to[in=260, out=110] (0.3,1.2) to[in=180] (0.9,1.5) to[in=140] (1.3,0.7) to[in=80, out=180] cycle;
        \fill[magenta, opacity=0.7] (2.3,0.7) to[in=110] (2.3,1.7) to[in=10, out=180] (1.4,1.2) to[in=90, out=330] (1.8,0.7) to[in=170] cycle;
        \fill[magenta, opacity=0.7] (0.2,2.7) to[in=120,out=280] (0.4,1.8) to[in=110] (1.3,1.8) to[in=290] (1.1,2.5) to[in=330] cycle;
        \fill[magenta, opacity=0.7] (1.8,2) to[in=290] (1.3,2.8) to[in=190] (2.3,2.8) to[in=90] (2.6,2.3) to[in=340, out=180] cycle;  
        \end{scope}

        \draw[->] (12,-0.5) -- (12,-1);
        \node at (12.6, -0.85) {$\xi_{[i + 1]}$};

        \begin{scope}[shift={(10.5,-4.5)}]
        \draw[line width=.7pt] (0,0) -- (0,3) -- (3,3) -- (3,0) -- cycle;
        \fill[cyan, opacity=0.7] (0.4,0.3) to[in=270] (0.6,1.2) to[in=160] (1.2,1.2) to[in=120, out=240] (1.3,0.4) to[in=50, out=140] cycle;
        \fill[cyan, opacity=0.7] (2.9,0.2) to[in=330, out=90] (2.6,1.2) to[in=300] (1.5,1.4) to[in=120, out=270] (1.8,0.5) to[in=100] cycle;
        \fill[cyan, opacity=0.7] (0.4,2.7) to[in=150, out=270] (0.5,1.4) to[in=180] (1.2,1.8) to[in=280] (1.2,2.3) to[in=270] cycle;
        \fill[cyan, opacity=0.7] (1.8,1.8) to[in=270] (1.6,2.7) to[in=170] (2.5,2.7) to[in=160] (2.7,1.8) to[in=330, out=190] cycle;   
        \end{scope}

        \draw[->] (7.5,-5) -- (7.5,-5.5);
        \node at (7.9, -5.3) {$\tau_i$};

        \begin{scope}[shift={(6,-9)}]
        \draw[line width=.7pt] (0,0) -- (0,3) -- (3,3) -- (3,0) -- cycle;
        \fill[lightgray, opacity=0.2] (0,0) -- (0,3) -- (3,3) to[in=40,out=260] (2.5,0.5) to[in=40,out=180] cycle;
        \draw[line width=1pt, orange] (0.3,0.3) to[in=260, out=100] (0.3,2.7) to[in=170, out=20] (2.7,2.8) to[in=60, out=280] (2.5,0.7) to[in=40, out=190] cycle;
        \fill[magenta, opacity=0.7] (0.4,0.8) to[in=240] (0.4,1.3) to[in=300] (0.4,1.6) to[in=160] (0.9,1.6) to[in=170] (1.3,1.6) to[in=60] (1.3,1.3) to[in=120, out=240] (1.3,0.8) to[in=330, out=200] cycle;
        \fill[magenta, opacity=0.7] (2.4,0.9) to[in=240, out=110] (2.4,1.7) to[in=40, out=110] (1.7,1.7) to[in=80, out=280] (1.7,0.9) to[in=160] cycle;
        \fill[magenta, opacity=0.7] (0.5,2.6) to[in=120,out=260] (0.5,2) to[in=200] (1.2,2) to[in=290] (1.2,2.6) to[in=330, out=90] cycle;
        \fill[magenta, opacity=0.7] (1.5,2.1) to[in=280] (1.5,2.6) to[in=190] (2.5,2.6) to[in=40] (2.5,2.1) to[in=30] cycle;
        \end{scope}

        \draw[->] (12,-5) -- (12,-5.5);
        \node at (12.7, -5.4) {$\kappa_{[i + 1]}$};

        \draw[->] (9.5,-7) -- (10, -7);
        \node at (9.7,-7.3) {$\Gamma\circ T$};

        \begin{scope}[shift={(10.5,-9)}]
        \draw[line width=.7pt] (0,0) -- (0,3) -- (3,3) -- (3,0) -- cycle;
        \draw[line width=1pt, orange] (0.3,0.3) to[in=260, out=100] (0.3,2.7) to[in=170, out=20] (2.7,2.8) to[in=60, out=280] (2.5,0.7) to[in=40, out=190] cycle;
        \fill[lightgray, opacity=0.2] (0,0) -- (0,3) -- (3,3) to[in=40,out=260] (2.5,0.5) to[in=40,out=180] cycle;
        \fill[cyan, opacity=0.7] (0.4,0.9) to[in=260] (0.4,1.4) to[in=190] (1.2,1.4) to[in=60] (1.2,0.9) to[in=330, out=190] cycle;
        \fill[cyan, opacity=0.7] (2.5,1) to[in=250] (2.5,1.4) to[in=330] (1.7,1.4) to[in=100, out=270] (1.7,1) to[in=200] cycle;
        \fill[cyan, opacity=0.7] (0.45,2.3) to[in=120, out=260] (0.45,1.6) to[in=160] (1.3,1.6) to[in=290] (1.3,2.3) to[in=290, out=200] cycle;
        \fill[cyan, opacity=0.7] (1.6,1.5) to[in=280] (1.6,2.3) to[in=190] (2.3,2.3) to[in=50, out=330] (2.3,1.5) to[in=20] cycle; 
        \draw[line width=1pt, magenta] (0.5, 0.8) -- (0.6, 2.6) -- (2.2, 2.75) -- (2.2,0.8);
        \draw[line width=1pt, magenta] (0.6, 0.8) -- (0.7, 2.5) -- (2.1, 2.65) -- (2.1,0.8);
        \draw[line width=1pt, magenta] (0.7, 0.8) -- (0.8, 2.4) -- (2, 2.55) -- (2,0.8);
        \draw[line width=1pt, magenta] (0.8, 0.8) -- (0.9, 2.3) -- (1.9, 2.45) -- (1.9,0.8);
        \end{scope}
    \end{tikzpicture}
    \caption{An illustration of the pseudo-horseshoe construction. In the right column, both squares represents the same set. The gray area is the intersection $F(U_i) \cap U_{[i + 1]}$ along the charts and the orange curve represents the set $W$ in the proof.}
    \label{fig-chained-pseudo-horseshoe}
\end{figure}
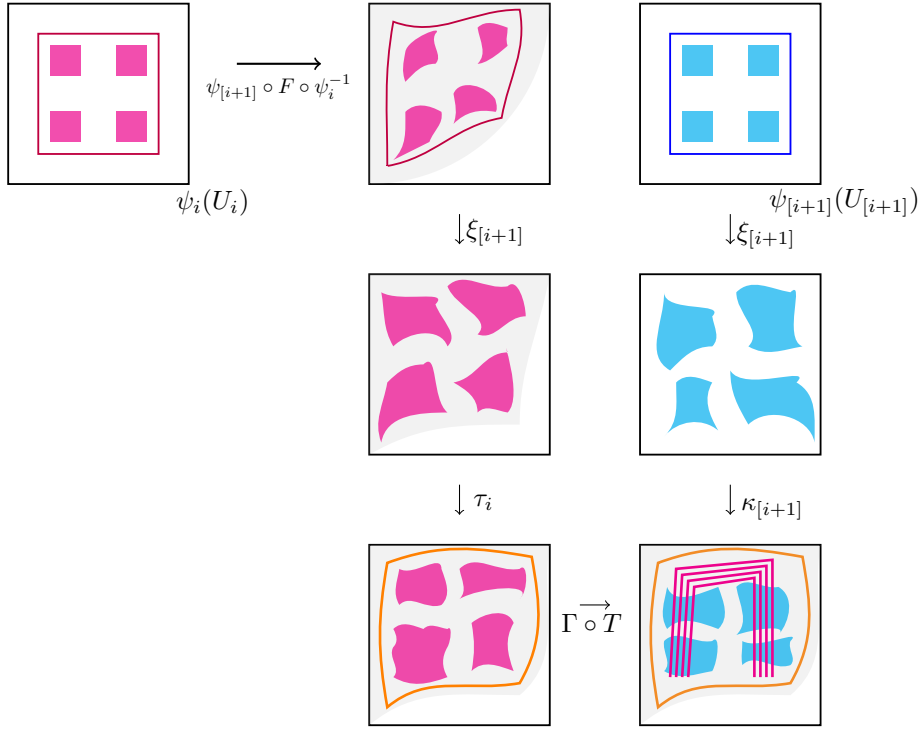


\section{Proof of the Main Theorems} \label{section-proof-of-the-main-theorems}

\subsection{Proof of Theorem A}

Denote by $\mathcal{H}$ the set $\text{Homeo}(X, \mu)$. Given $F \in \mathcal{H}$, an integer $k \geq 1$, and $\eta > 0$, there is an open set $\mathcal{U}_F^k$ such that every $g \in \mathcal{U}_F^k$ has a chained $(N_k, p)$-pseudo-horseshoe and $D(F, g) < \eta$. This is a consequence of Remark \ref{remark-c0-perturbation-markov} and Lemma \ref{existence-chained-pseudo-horseshoe}.

From Proposition \ref{separating_sets_pseudo_horseshoe}, remember that $C > 1$ is the bi-Lipschitz upper bound of all charts, 
\[
S(g, \ell, C\varepsilon_k) \geq (N_k)^\ell = \left(\frac{2\delta}{\varepsilon_k}\right)^{n\cdot \ell}.
\]

Then
\[
\text{Sep}(g, \varepsilon_k) \geq \text{Sep}(g, C\varepsilon_k) = \limsup\limits_{\ell \to \infty} \frac{\log S(g, \ell, C\varepsilon_k)}{\ell} \geq n (\log 2\delta - \log \varepsilon_k).
\]

Consider the set $\mathcal{U}^k = \bigcup\limits_{F \in \mathcal{H}} \mathcal{U}_F^k$, that is open and dense in $\mathcal{H}$. Therefore, $\mathfrak{R} = \bigcap\limits_{k \in \mathbb{N}} \mathcal{U}^k$ is a residual set in $\mathcal{H}$. If $f \in \mathfrak{R}$, then 
\[
\overline{\text{mdim}}_M(X, f, d) = \limsup\limits_{\varepsilon \to 0} \frac{\text{Sep}(f, \varepsilon)}{-\log\varepsilon} \geq \lim\limits_{k \to \infty} \frac{\text{Sep}(f, \varepsilon_k)}{-\log\varepsilon_k} \geq n - \lim\limits_{k \to \infty}\frac{n\log 2\delta}{\log \varepsilon_k}. 
\]
Remember that $\delta > 0$ depends only on the finite atlas, so the above limit is well defined. Thus, $\overline{\text{mdim}}_M(X, f, d) \geq n$. By Remark 4 in \cite{velozo_velozo_2017}, $n$ is the upper bound for the metric mean dimension. Therefore, for all $f \in \mathfrak{R}$, $\overline{\text{mdim}}_M(X, f, d) = n$.


\subsection{Proof of Theorem B}

Choose $f \in \mathfrak{R}$, as in the proof of Theorem \ref{conservative-homeo-full-metric-mean-dimension}. By Proposition \ref{separating_sets_pseudo_horseshoe}, there is $\mathcal{S}_k$, $(m, \varepsilon_k)$-separated set such that $\#\mathcal{S}_k = (N_k)^m$. For each $x_i, x_j \in \mathcal{S}_k$, $1 \leq i < j \leq (N_k)^m$, note that $B_{(m, \varepsilon_k/2)}(x_i) \cap B_{(m, \varepsilon_k/2)}(x_j) = \varnothing$. Otherwise, $d_m(x_i, x_j) < \varepsilon_k$.

Set $\delta_k = \min_{1 \leq i \leq (N_k)^m} \mu(B_{(m, \varepsilon_k/2)}(x_i))$ and consider $\mathcal{V}$ a subset of $X$ with $\mu(\mathcal{V}) > 1 - \delta_k$, and $N(\mathcal{V}, f, m, \varepsilon_k/2) = N_\mu(f, m, \varepsilon_k/2, \delta_k)$, where $N(\mathcal{V}, f, m, \varepsilon_k/2)$ is defined to be the minimum number of $(m, \varepsilon_k/2)$-dynamical balls needed to cover $\mathcal{V}$. As in the proof of Lemma 1 in \cite{velozo_velozo_2017}, we can assume that $\mathcal{V}$ is open in $X$. Note that when $k \to \infty$, then $\delta_k \to 0$.

\vspace{5mm}
\textit{Claim.} For all $i \in \{1, ..., (N_k)^m\}$, $\mathcal{V} \cap B_{(m, \varepsilon_k/2)}(x_i) \neq \varnothing$.

Indeed, if there is $j \in \{1, ..., (N_k)^m\}$ such that $\mathcal{V} \cap B_{(m, \varepsilon_k/2)}(x_j) = \varnothing$, then $\mathcal{V} \subset (B_{(m, \varepsilon_k/2)}(x_j))^\complement$. But this implies that $\mu(\mathcal{V}) < 1 - \delta_k$. Contradiction.
\vspace{5mm}

Therefore, $N(\mathcal{V}, f, m, \varepsilon_k/2) = N_\mu(f, m, \varepsilon_k/2, \delta_k) \geq (N_k)^m$, and
\[
h_\mu(\varepsilon_k/2, f, \delta_k) = \limsup\limits_{m \to \infty} \frac{\log N_\mu(f, m, \varepsilon_k/2, \delta_k)}{m} \geq \log N_k.
\]
A similar computation as in the proof of Theorem \ref{conservative-homeo-full-metric-mean-dimension} results in the following inequality
\[
\overline{\text{mdim}}_M(X, f, d)  \geq \lim\limits_{k \to \infty} \frac{h_\mu(\varepsilon_k/2, f, \delta_k)}{-\log\varepsilon_k/2} \geq n.
\]
Finally, $\mu$ attains the supremum in the variational principle stated in Theorem \ref{variational-principle-velozo}.


\section{Remarks}

As a conclusion, we formulate a few remarks:
\begin{enumerate}
    \item[I)] A very similar proof can be applied to the dissipative case. Therefore, we also provide an alternative proof for Theorem A in \cite{carvalho_rodrigues_varandas_2020}.
    \item[II)] The authors believe that the technique presented in this work helps to prove the density of the level sets of the metric mean dimension in the conservative case, just as it has been done in the dissipative case in \cite{acevedo_romana_arias_2023}.
\end{enumerate}

\printbibliography

\end{document}